\theoremstyle{plain}
\newtheorem{theorem}{Theorem}[section]
\newtheorem{lemma}[theorem]{Lemma}
\newtheorem{corollary}[theorem]{\bf Corollary}
\newtheorem{remark}[theorem]{\bf Remark}
\newtheorem{proposition}[theorem]{\bf Proposition}
\newtheorem{claim}{\bf Claim}
\newcommand \Aut{ \mbox{Aut}}
\newcommand \Id{ \mbox{Id}}
\newcommand \dom{{\rm{ dom}}}
\newcommand \rng{{\rm{ rng}}}
\newcommand \age{{\rm{ Age}}}
\newcommand \aut{{\rm{ Aut}}}
\newcommand \acl{{\rm{ acl}}}
\def\acts{\curvearrowright}
\def\f{\mathcal{F}}
\newcommand {\gen}[1]{\left\langle #1 \right\rangle}
\newcommand \NN{\mathbb{N}}
\newcommand \QQ{\mathbb{Q}}
\newcommand \QU{\mathbb{QU}}
\newcommand \ZZ{\mathbb{Z}}
\newcommand \SSS{\mathbb{S}}
\newcommand{\fra}{Fra\"\i ss\'e }
\begin{document}

\title[Automorphism groups and groups of measurable functions]{Automorphism groups of countable structures and groups of measurable functions}
\author[A. Kwiatkowska]{Aleksandra Kwiatkowska}
\address{Institut f\"{u}r Mathematische Logik und Grundlagenforschung, Universit\"{a}t  M\"{u}nster, Einsteinstrasse 62, 48149  M\"{u}nster,
Germany {\bf{and}} Instytut Matematyczny, Uniwersytet Wroc{\l}awski,  pl. Grunwaldzki 2/4, 50-384 Wroc{\l}aw, Poland}
\email{kwiatkoa@uni-muenster.de}
\author[M. Malicki]{Maciej Malicki}
\address{Department of Mathematics and Mathematical Economics, Warsaw School of Economics, al. Niepodleg{\l}o\'sci 162, 02-554, Warsaw, Poland}
\email{mamalicki@gmail.com}

\thanks{The first named author was supported by Narodowe Centrum Nauki grant 2016/23/D/ST1/01097.}

\keywords{$L_0(G)$ groups, automorphism groups of countable structures, ample generics, similarity classes}
\subjclass[2010]{03E15, 54H11}

\begin{abstract}

Let $G$ be a topological group  and let $\mu$ be the Lebesgue measure on the interval $[0,1]$. We let $L_0(G)$ to be the topological group of all $\mu$-equivalence classes of $\mu$-measurable functions defined on [0,1] with values in $G$, taken with the pointwise multiplication and the topology  of convergence in measure.
We show that for a Polish group $G$, if $L_0(G)$ has ample generics, then $G$ has ample generics, thus the converse to a result of Ka\"{i}chouh and Le Ma\^{i}tre.

We further study topological similarity classes and conjugacy classes for many groups $\Aut(M)$ and $L_0(\Aut(M))$, where $M$ is a countable structure. We make a connection between the structure of groups generated by tuples, the Hrushovski property, and the structure of their topological similarity classes. 
In particular, we prove the  trichotomy that for every tuple $ \bar{f}$ of $\Aut(M)$, where $M$ is a countable structure such that algebraic closures of finite sets are finite,
either the countable group  $\langle \bar{f} \rangle$ is precompact, or
it is discrete, or the similarity class of $\bar{f}$ is meager, in particular the conjugacy class of $\bar{f}$ is meager.
We prove an analogous trichotomy for groups $L_0(\Aut(M))$.
\end{abstract}

\maketitle

\tableofcontents

\section{Introduction}

Let $G$ be a topological group, and let $\mu$ be the Lebesgue measure on the interval $[0,1]$.
We define $L_0(G)$ to be the topological group of all $\mu$-equivalence
classes of $\mu$-measurable functions on [0,1] with values in $G$. The multiplication in $L_0(G)$ is pointwise,
and the topology is of convergence in measure; a  basic neighbourhood of the identity  is of the form
\[\{ f\in L_0(G)\colon \mu\{x\in [0,1]\colon f(x)\notin V\}<\epsilon\},\]
where $V$ is a neighbourhood of the identity in $G$ and $\epsilon>0$.

Groups of the form $L_0(G)$ were first considered by Hartman and Mycielski~\cite{HM}, who used them to show that every topological group
can be embedded into a connected one. In fact, $L_0(G)$  is a path-connected and locally-path connected group. In recent years, $L_0(G)$ groups and their generalizations, where instead of a non-atomic probability measure we consider a diffused submeasure,
have attracted attention of a number of researchers.
For example, Glasner~\cite{Gla}, Pestov~\cite{Pe}, Pestov-Schneider \cite{PS}, Farah-Solecki \cite{FS}, and Sabok \cite{Sa}, 
studied  extreme amenability for such groups.
 Solecki~\cite{So} classified all unitary representations of $L_0(\SSS^1)$, where $\SSS^1$ is the circle equipped 
with the usual operation of adding angles. Ka\"{i}chouh and Le Ma\^{i}tre  \cite{KM} gave the first examples of connected Polish groups that have ample generics, all of the form $L_0(G)$. As a matter of fact, this result was the original motivation for our paper.

A {\em Polish group}, that is a separable and completely metrizable topological group, $G$ has {\em ample generics} if it has a {\em comeager diagonal conjugacy class} for every $n$,  that is, if for every $n\geq 1$ the {\em $n$-diagonal conjugation action} of $G$ on $G^n$, given by $g.(h_1,\ldots, h_n)=(gh_1g^{-1},\ldots ,gh_ng^{-1})$, has a comeager orbit. This notion was first studied by Hodges, Hodkinson, Lascar, and Shelah in \cite{HoHo}, and later by Kechris and Rosendal in \cite{KR}. It is a very strong property: a group $G$ with ample generics has the small index property, every homomorphism from $G$ into a separable group is continuous, every isometric action of $G$ on a separable metric space is continuous, and there is only one Polish group topology on $G$.
Surprisingly, many  automorphism  groups do have ample generics, among the examples are the automorphism groups $\Aut(M)$ of the following countable structures $M$:
the random graph (Hrushovski \cite{Hr}), the free group on countably many generators (Bryant and Evans \cite{BrEv}), arithmetically saturated models of true arithmetic (Schmerl \cite{Sch}) or the countable atomless Boolean algebra (Kwiatkowska \cite{Kw}).

 On the other hand, until recently, it was not known whether there exist Polish groups with ample generics that are not isomorphic to the automorphism group of some countable structure (i.e.,  non-archimedean groups). 
Ka\"{i}chouh and Le Ma\^{i}tre \cite{KM} (see also Malicki \cite{Mal}) showed that if a Polish group $G$ has ample generics, then the Polish group $L_0(G)$ (which, being connected, is never non-archimedean) 
has ample generics as well. It is a natural question whether the converse to their result holds. In Section 3, we will give a positive answer to it (Corollary \ref{conv}).  

The notion of  topological similarity  was introduced by Rosendal \cite{R} who used it to give simple proofs of the non-existence of comeager conjugacy classes in groups such as the isometry group of the Urysohn metric space or the group of all Lebesgue measure preserving automorphisms of the interval [0,1]. A tuple $\bar{f}=(f_1,\ldots, f_n)$ in a topological group $G$ is said to be {\em topologically similar} to a tuple $(g_1,\ldots, g_n)$ if the map sending $f_i \mapsto g_i$ extends (necessarily uniquely) to a bi-continuous 
isomorphism between the countable topological groups generated by these tuples. The {\em topological $n$-similarity class} of $\bar{f}$ is then the set of  all  $n$-tuples in $G$ that are topologically similar to it. Clearly, if two tuples are diagonally conjugate, then they are topologically similar.


In Section 4, we study under what circumstances, a non-archimedean Polish group $G$, 
as well as $L_0(G)$, have all  topological similarity classes meager.
We will prove (Theorem \ref{th:MeagSimClass}) that the automorphism group $G=\Aut(M)$ of a countable structure $M$ such that algebraic closures of finite subsets of $M$ are finite, has only meager topological $n$-similarity classes, provided that comeagerly many $n$-tuples in $G$ generate a non-discrete and non-precompact group. As a matter of fact, we obtain a general trichotomy (Theorem \ref{th:Trichotomy}) that relates the structure of subgroups generated by tuples with the structure of their topological similarity classes. Namely, for every tuple $\bar{f}$ in $G$, one of the following holds: $\bar{f}$ generates a precompact group or $\bar{f}$ generates a discrete group or $\bar{f}$ has meager topological similarity class. We also prove a trichotomy (Theorem \ref{th:Hrushovski}) that involves the Hrushovski property and implies that if $G={\rm Aut}(M)$ has ample
generics then either $M$ has the Hrushovski property or the 1-Hrushovski property fails.

If, moreover, there is a dense $n$-diagonal conjugacy class in $G$, the same conditions 
on $G$ as in the statement of Theorem \ref{th:MeagSimClass} imply that all topological $n$-similarity classes in $L_0(G)$ are meager (Theorem \ref{th:MeagSimClassL0}), and we prove that
an analogous trichotomy to the one stated in Theorem \ref{th:Trichotomy}  holds for tuples in $L_0(G)$
(Theorem \ref{th:TrichotomyL0}). We will combine Theorem \ref{th:MeagSimClass} with Lemmas 3.13 and 5.7 in \cite{Sl} about comeagerness of the set of pairs generating a non-discrete group and recover results of Slutsky \cite{Sl} that topological similarity classes of automorphism groups of rationals and of the ordered rational Urysohn space are meager, and we will generalize them to $L_0(G)$, where $G$ is one of these two groups.

\section{Preliminaries}

\subsection{Groups of measurable functions}
Let $X$  be the interval [0,1] and let $\mu$ be the Lebesgue measure. Let $Y$ be a Polish space, we then define 
$L_0(X,\mu; Y)$ to be the set of all ($\mu$-equivalence classes of) measurable (equivalently: Borel) functions from $X$ to $Y$. We equip this space with the convergence of measure topology.

The neighborhood basis at $h\in L_0(X,\mu; Y)$ is
 \[[h,\delta,\epsilon]=\{ g\in L_0(X,\mu; Y) \colon \mu(\{x\in X \colon d(g(x),h(x))<\delta\})>1-\epsilon \}, \]
     where $d$ is a fixed compatible complete metric on $Y$.
This topology does not depend on the choice of the metric $d$ on $Y$.
Then we immediately see that the sets
 \[[h,\epsilon]=\{ g\in L_0(X,\mu; Y) \colon \mu(\{x\in X \colon d(g(x),h(x))<\epsilon\})>1-\epsilon \} \]
also form a neighborhood basis at $h$.
The collections of sets $\{[h,\frac{1}{k},\frac{1}{n}]\colon h\in  L_0(X,\mu; Y), k,n\in\NN\}$ and 
$\{[h,\frac{1}{k}]\colon h\in  L_0(X,\mu; Y), k\in\NN\}$ are both bases of $ L_0(X,\mu; Y)$.
Instead of taking all $h\in L_0(X,\mu; Y)$, we could consider only those in some dense set, e.g., the set of step functions $f_{\bar{a}}$, where $\bar{a}=(a^1, \ldots, a^{n})$ is an $n$-tuple in $Y$, and $f_{\bar{a}}$ is defined by
\[ f_{\bar{a}}(x)=a^k \mbox{ iff } x \in \left[ \frac{k-1}{n}, \frac{k}{n} \right]. \]


 The space  $ L_0(X,\mu; Y)$ is metrizable by the metric
\[ \rho(f,g)=\inf\{\epsilon>0\colon \mu(\{ x\in X\colon d(g(x),f(x))>\epsilon\})<\epsilon \}.\]
The metric $\rho$ is complete, and $ L_0(X,\mu; Y)$ is a Polish space.


When $Y=G$ is a Polish group, then $ L_0(X,\mu; G)$ is a Polish group as well, with the identity equal to the constant function with value equal to
the identity in $G$ and the multiplication and inverse are given pointwise: $(fg)(x)=f(x)g(x)$ and
$(f^{-1})(x)=f^{-1}(x)$.
A Polish $G$-space $Z$ induces a Polish  $ L_0(X,\mu; G)$-space $ L_0(X,\mu; Z)$, where we act coordinatewise, i.e. $(g.f)(x)=g. (f(x))$.

For a given $B\subseteq Z$, we let
\[  L_0(X,\mu; B)=\{ f\in L_0(X,\mu; Z) \colon f(x)\in B\}.\]
For short, we will often write $L_0(Z)$ instead of $ L_0(X,\mu; Z)$. 

\subsection{Countable structures and non-archimedean groups.}

A \emph{structure} is a set with relations and functions. A structure $M$ is called \emph{ultrahomogeneous} if every isomorphism between finite substructures of $M$ can be extended to an automorphism of $M$. In this paper, we will work only with countable and {\em locally finite} structures, i.e., countable structures all of whose finitely generated substructures are finite.

A Polish group is \emph{non-archimedean} if it admits a neighbourhood basis at the identity consisting of open subgroups. It is well known that non-archimedean Polish groups are exactly those that can be realized as automorphism groups $\Aut(M)$ (equipped with the pointwise convergence topology) of ultrahomogeneous structures $M$ or, equivalently, \fra limits of \fra classes of finite structures. For more details about the \fra construction, and terminology related to it, see \cite[Chapter 7]{Ho}. In particular, we will be using the notion of {\em age}, denoted $\age(M)$, of a structure $M$ (i.e. the class of all finite substructures embeddable in $M$), and its properties such as the joint embedding property or the amalgamation property.

\section{Comeager conjugacy classes in $L_0(G)$}

The main result of this section is the following theorem.
\begin{theorem}\label{comea}
Let $G$ be a Polish group and $Y$ a Polish $G$-space. If there is no comeager orbit in $Y$, then every orbit in the induced Polish $L_0(G)$-space $ L_0(Y)$ is meager.
\end{theorem}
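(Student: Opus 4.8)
The plan is to establish the equivalent statement: \emph{if some $L_0(G)$-orbit $O=L_0(G)\cdot g$ in $L_0(Y)$ is non-meager, then $G\acts Y$ has a comeager orbit}. So fix such a $g\in L_0(Y)$, viewed as an honest Borel function $[0,1]\to Y$, and first describe $O$ and $\overline O$. Using Jankov--von Neumann uniformization (of, for $\eta>0$, the analytic sets $\{(x,\gamma)\in[0,1]\times G : d(\gamma\cdot g(x),h(x))<\eta\}$, whose vertical sections are nonempty a.e.\ once $h(x)\in\overline{G\cdot g(x)}$ a.e.), one verifies
\[
O=\{h\in L_0(Y):h(x)\in G\cdot g(x)\text{ for a.e. }x\},\qquad
\overline O=\{h\in L_0(Y):h(x)\in\overline{G\cdot g(x)}\text{ for a.e. }x\};
\]
the inclusions ``$\subseteq$'' are immediate after passing to an a.e.\ convergent subsequence.

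Since orbits of Polish group actions have the Baire property and $O$ is non-meager, $O$ is comeager in a nonempty open set, so $\overline O$ has nonempty interior. I would deduce from this that $\overline{G\cdot g(x)}=Y$ for a.e.\ $x$. Indeed, if the set $D$ of $x$ with $\overline{G\cdot g(x)}\neq Y$ had positive measure, then, since for each such $x$ the closed proper set $\overline{G\cdot g(x)}$ fails to be $\tfrac1n$-dense for some $n$, we could fix $n_0$ with $\mu\bigl(\{x:\overline{G\cdot g(x)}\text{ is not }\tfrac1{n_0}\text{-dense}\}\bigr)>0$, pick a subset $D'$ of it with $0<\mu(D')<\varepsilon$, and choose measurably a point $y(x)$ with $d(y(x),\overline{G\cdot g(x)})\ge\tfrac1{n_0}$ for $x\in D'$; replacing any $h$ by the function equal to $h$ off $D'$ and to $y(x)$ on $D'$ gives an $h'$ within $\varepsilon$ of $h$ all of whose small $L_0$-neighbourhoods are disjoint from $\overline O$ (nearby functions still leave $\overline{G\cdot g(x)}$ on a positive-measure subset of $D'$), so $\overline O$ would be nowhere dense, a contradiction. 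In particular $G\cdot g(x)$ is dense in $Y$ for a.e.\ $x$.

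The heart of the proof is a ``measure in $x$, category in $y$'' lemma: if $E\subseteq[0,1]\times Y$ is analytic with $E_x$ meager for a.e.\ $x$, then $S:=\{h\in L_0(Y):h(x)\in E_x\text{ for a.e. }x\}$ is meager. One first treats a \emph{Borel} $F$ all of whose sections are nowhere dense: after replacing $F_x$ by $\overline{F_x}$ (which keeps sections nowhere dense and makes $h\mapsto\mu\{x:(x,h(x))\in F\}$ upper semicontinuous), the set $\{h:h(x)\notin F_x\text{ for a.e. }x\}$ is $G_\delta$, and it is dense because for any $h$ and $\sigma>0$ nowhere-density of each $F_x$ plus a measurable selection yields an $h'$ with $d(h'(x),h(x))<\sigma$ for \emph{every} $x$ and $h'(x)\notin F_x$ for every $x$; hence this set is comeager. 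For the general lemma, discard the null set of $x$ with $E_x$ non-meager, cover the resulting analytic set (all sections meager) by a countable union $\bigcup_n F_n$ of Borel sets with nowhere-dense sections --- the fiberwise analogue of ``a meager set lies in a meager $F_\sigma$'', from the Montgomery--Novikov theory of sets with small sections --- and intersect the comeager sets attached to the $F_n$.

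To finish, suppose toward a contradiction that $G\acts Y$ has no comeager orbit while $O$ is non-meager. By the first step $O=\{h:h(x)\in G\cdot g(x)\text{ for a.e. }x\}$, so by the lemma (contrapositive) the analytic set $E=\{(x,y):y\in G\cdot g(x)\}$ does not have a.e.\ meager sections, i.e.\ $G\cdot g(x)$ is non-meager for $x$ in a set of positive measure; for a.e.\ such $x$ it is also dense by the second step, and a non-meager dense orbit is comeager in its closure $\overline{G\cdot g(x)}=Y$, contradicting the hypothesis. Hence every orbit in $L_0(Y)$ is meager. I expect the main obstacle to be the lemma of the third paragraph, and in particular its structural input that an analytic set with a.e.\ meager sections is covered by countably many Borel sets with nowhere-dense sections; everything else is bookkeeping with measurable selections and the convergence-in-measure metric.
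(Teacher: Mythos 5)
Your overall architecture is sound and, at its core, runs parallel to the paper's: the orbit description via Jankov--von Neumann, the closedness (upper semicontinuity) of $h\mapsto\mu\{x: h(x)\in F_x\}$ for closed-section Borel families, and the density of the complement via a fiberwise perturbation chosen by a measurable selector are exactly the mechanisms the paper uses (its Lemmas 3.4--3.6 and the proof of Claim 3.2); your step showing $\overline{G\cdot g(x)}=Y$ a.e.\ is a fiberwise version of the paper's treatment of non-dense orbits (Lemma 3.8 plus Kuratowski--Ulam in Claim 3.3), and the final assembly (dense $+$ non-meager orbit is comeager) is the same reduction the paper makes before stating its two claims. The genuine gap is the structural input you yourself flag: the claim that an analytic set $E\subseteq[0,1]\times Y$ with a.e.\ meager sections is covered by countably many Borel sets with nowhere dense sections. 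This does not follow from Montgomery--Novikov, which only says that category quantifiers applied to Borel sets yield Borel sets; what you need is a parametrized covering theorem, and as written you neither prove it nor cite a result that actually contains it. It is true, but it is serious descriptive set theory: one route is first reflection (``all sections meager'' is $\Pi^1_1$ on $\Sigma^1_1$) to pass to a Borel superset with meager sections, then a fiberwise $F_\sigma$-hull together with Saint Raymond's theorem that a Borel set with $F_\sigma$ sections is a countable union of Borel sets with closed sections (closed subsets of meager sections are automatically nowhere dense); alternatively one can argue via effective Baire category theory. None of this is ``bookkeeping,'' and a related technical point in your reduction is also off: the fiberwise closure $\{(x,y): y\in\overline{F_x}\}$ of a Borel set is in general only analytic, so ``replace $F_x$ by $\overline{F_x}$'' should be avoided by demanding closed nowhere dense sections from the covering in the first place (and one should note why $x\mapsto F_x$ is then Borel into $\f(Y)$, which is what feeds the closedness lemma).

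The paper sidesteps all of this by exploiting that the sections in question are orbits: its Lemma 3.5 shows that when $G\cdot f(x)$ is meager one can Borel-measurably choose a basic neighbourhood $U$ of the identity with $\overline{U\cdot f(x)}$ nowhere dense, and then cover the orbit by the translates $\{g_1,\dots,g_n\}\cdot\overline{U\cdot f(x)}$ for a countable dense $\{g_i\}\subseteq G$, yielding exactly the Borel map $F\colon X\to\f(Y)^\NN$ with increasing closed nowhere dense values that your lemma needs. Since your application is precisely to $E=\{(x,y): y\in G\cdot g(x)\}$, the fix is either to import this explicit orbit-specific construction (in which case your proof becomes essentially the paper's, repackaged with the ``all orbit closures are $Y$ a.e.'' step replacing Claim 3.3), or to state and prove (or properly cite) the general parametrized covering theorem, which is a substantially stronger commitment than the argument suggests.
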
 

\begin{corollary}\label{conv}
If $L_0(G)$ has a comeager conjugacy class, then $G$ has a comeager conjugacy class.
\end{corollary}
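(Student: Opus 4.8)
The plan is to deduce Corollary~\ref{conv} from Theorem~\ref{comea} by taking $Y = G$ with the conjugation action, since a comeager conjugacy class in a Polish group is exactly a comeager orbit of the conjugation action. So first I would recall that $G$ acts on itself by conjugation, $g.h = ghg^{-1}$, and that this is a continuous (indeed Polish) $G$-action, so $Y := G$ with this action is a Polish $G$-space in the sense of Section~2. The induced Polish $L_0(G)$-space is then $L_0(Y) = L_0(G)$, and the induced action of $f \in L_0(G)$ on $h \in L_0(G)$ is the coordinatewise one: $(f.h)(x) = f(x)h(x)f(x)^{-1}$. The one point that needs checking is that this induced action agrees with the conjugation action of the \emph{group} $L_0(G)$ on itself, i.e.\ that $(f.h)(x) = (fhf^{-1})(x)$ for a.e.\ $x$; but this is immediate from the pointwise definition of multiplication and inversion in $L_0(G)$ recalled in Section~2.1. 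Hence orbits of the conjugation action of $L_0(G)$ on itself coincide with orbits of the induced $L_0(G)$-action on $L_0(Y)$.

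With this identification in place, the corollary is a straightforward contrapositive. Suppose $G$ has no comeager conjugacy class; equivalently, the conjugation action of $G$ on $Y = G$ has no comeager orbit. Then Theorem~\ref{comea} applies and gives that every orbit of $L_0(G)$ on $L_0(Y) = L_0(G)$ is meager, i.e.\ every conjugacy class of $L_0(G)$ is meager, so $L_0(G)$ has no comeager conjugacy class. Taking the contrapositive: if $L_0(G)$ has a comeager conjugacy class, then $G$ has a comeager conjugacy class.

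There is essentially no obstacle here beyond bookkeeping: the only thing to be careful about is the bridge between ``$L_0$ of a $G$-space'' and ``$L_0(G)$ acting on itself,'' and this is exactly the coordinatewise-versus-pointwise compatibility noted above. One should also note that $G$ acting on $G$ by conjugation is genuinely a Polish $G$-space (it is a continuous action of a Polish group on a Polish space), so the hypotheses of Theorem~\ref{comea} are met. Everything else is a direct quotation of Theorem~\ref{comea}.
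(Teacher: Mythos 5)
Your proof is correct and is exactly the intended argument: the paper derives Corollary~\ref{conv} from Theorem~\ref{comea} by taking $Y=G$ with the conjugation action, noting that the induced coordinatewise action of $L_0(G)$ on $L_0(Y)=L_0(G)$ is conjugation in $L_0(G)$, and passing to the contrapositive. Nothing to add.
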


Since we can naturally identify the groups $L_0(G)^n$ and $L_0(G^n)$, in fact, we obtain a stronger result.

\begin{corollary}
If, for some $n \geq 1$, $L_0(G)$ has a comeager $n$-dimensional diagonal conjugacy class, then $G$ has a comeager $n$-dimensional diagonal conjugacy class. In particular, if $ L_0(G)$ has  ample generics, then $G$ has ample generics.
\end{corollary}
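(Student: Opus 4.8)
The plan is to deduce the final corollary directly from Theorem~\ref{comea} by unwinding the definition of ample generics and exploiting the natural identification of $L_0(G)^n$ with $L_0(G^n)$. First I would observe that the $n$-diagonal conjugation action of $L_0(G)$ on $L_0(G)^n$ is, under this identification, exactly the induced $L_0(G)$-action on $L_0(G^n)$ coming from the $n$-diagonal conjugation $G$-action on $G^n$: for $f \in L_0(G)$ and $\bar h = (h_1,\dots,h_n)$ a tuple of functions, the tuple $(fh_1f^{-1},\dots,fh_nf^{-1})$ evaluated at $x$ is $(f(x)h_1(x)f(x)^{-1},\dots,f(x)h_n(x)f(x)^{-1})$, which is precisely $f(x)$ acting diagonally on the point $(h_1(x),\dots,h_n(x)) \in G^n$. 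So the relevant $G$-space is $Y = G^n$ with the diagonal conjugation action, and the relevant $L_0(G)$-space is $L_0(Y) = L_0(G^n) \cong L_0(G)^n$.

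Next I would apply the contrapositive of Theorem~\ref{comea}. Suppose $L_0(G)$ has a comeager $n$-diagonal conjugacy class, i.e.\ there is a comeager orbit in the induced $L_0(G)$-space $L_0(G^n)$. A comeager orbit is in particular non-meager, so by Theorem~\ref{comea} (in contrapositive form: if \emph{some} orbit in $L_0(Y)$ is non-meager, then there is a comeager orbit in $Y$) applied to $Y = G^n$, there is a comeager orbit in $G^n$ under the $n$-diagonal conjugation action of $G$; that is, $G$ has a comeager $n$-diagonal conjugacy class. Taking this for all $n \geq 1$ gives that if $L_0(G)$ has ample generics then $G$ has ample generics.

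There is essentially no obstacle here beyond bookkeeping: the one point that needs a line of care is checking that the identification $L_0(G)^n \cong L_0(G^n)$ is a homeomorphism \emph{and} intertwines the two actions, but this is routine from the definition of the convergence-in-measure topology (a basic neighbourhood in $L_0(G^n)$ is controlled by a basic neighbourhood of the identity in $G^n$, which is a product of basic neighbourhoods in $G$, matching the product topology on $L_0(G)^n$). One should also note that ``comeager orbit'' is being used in the sense already fixed in the paper, and that $L_0(G^n)$ is Polish (as recorded in Section~2) so that Baire category arguments apply. Everything else is a direct quotation of Theorem~\ref{comea}.
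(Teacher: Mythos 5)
Your proposal is correct and is essentially the paper's own argument: the paper proves the corollary by exactly this identification of $L_0(G)^n$ with $L_0(G^n)$ (intertwining the diagonal conjugation actions) and then applying Theorem~\ref{comea} with $Y=G^n$. Nothing further is needed.
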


Let a Polish group continuously act on a Polish space.
As each non-meager orbit is comeager in some non-empty open set, and two different orbits cannot intersect, the number of non-meager orbits is bounded by the cardinality of a basis for the topology.  
Thus, there can be only countably many non-meager orbits, and every dense and non-meager orbit is comeager. Therefore 
the proof of Theorem \ref{comea} will follow immediately from the following two claims, which we prove in
Sections \ref{c1} and \ref{c2}, respectively.

\begin{claim}\label{meager}
Let $f \in L_0(G)$ be given. Suppose that for all $x \in X$, the orbit of $f(x)$ is meager in the action $G\acts Y$. Then the orbit of $f$ in the induced action $L_0(G)\acts L_0(Y)$ is meager as well. 
\end{claim}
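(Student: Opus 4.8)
The plan is to wrap the orbit of $f$ inside a meager set of $L_0(Y)$ assembled from nowhere dense covers of the individual meager orbits $G\acts f(x)$. First, reduce to a ``fibered'' target: if $h=g.f$ for some $g\in L_0(G)$, then $h(x)=g(x).f(x)\in G\cdot f(x)$ for $\mu$-a.e.\ $x$, so the orbit of $f$ is contained in $\widetilde O:=\{h\in L_0(Y): h(x)\in G\cdot f(x)\text{ for }\mu\text{-a.e. }x\}$, and it is enough to show $\widetilde O$ is meager. (Fix once and for all a Borel representative of $f$, so that this set and the auxiliary sets below are honestly defined.)

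\emph{Step 1: a Borel family of nowhere dense covers.} Fix a decreasing neighbourhood basis $(W_n)$ of $1_G$ in $G$ and a countable dense set $(g_i)\subseteq G$, so that $G=\bigcup_i g_iW_n$ for every $n$. By the standard theory of Polish group actions (Effros' theorem), a point $y\in Y$ has meager orbit if and only if $\overline{W_n\cdot y}$ is nowhere dense in $Y$ for some $n$; by hypothesis this applies to $y=f(x)$ for every $x$. The maps $y\mapsto\overline{W_n\cdot y}$ and $y\mapsto g_i\cdot\overline{W_n\cdot y}$ are Borel into the Effros Borel space of closed subsets of $Y$, and $\{y:\overline{W_n\cdot y}\text{ is nowhere dense}\}$ is Borel; organizing this according to the least such $n$ (and declaring everything empty outside) and re-enumerating the pairs $(n,i)$ as $m\in\NN$, we obtain Borel maps $y\mapsto F_m(y)$ with each $F_m(y)$ closed nowhere dense (or empty) and $G\cdot f(x)\subseteq\bigcup_m F_m(f(x))$ for every $x$. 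By the Kuratowski--Ryll-Nardzewski selection theorem applied to the Borel multifunction $x\mapsto F_m(f(x))$, the sets $\Gamma_m:=\{(x,z)\in X\times Y: z\in F_m(f(x))\}$ are Borel, with closed vertical sections $\Gamma_{m,x}$.

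\emph{Step 2: conclusion.} For $h\in\widetilde O$ the countably many measurable sets $\{x:h(x)\in F_m(f(x))\}$ cover $\mu$-a.e.\ $x$, so at least one has positive measure; hence $\widetilde O\subseteq\bigcup_m C_m$ where $C_m:=\{h\in L_0(Y):\mu(\{x:(x,h(x))\in\Gamma_m\})>0\}$. Fixing $m$ and setting $\Phi(h):=\mu(\{x:(x,h(x))\in\Gamma_m\})$, we have $C_m=\bigcup_k\{h:\Phi(h)\ge 1/k\}$. A routine argument --- pass to a subsequence converging in measure to $h$, then to a further subsequence converging $\mu$-a.e., and use that $\Gamma_{m,x}$ is closed together with the reverse Fatou lemma --- shows $\Phi$ is upper semicontinuous for the topology of convergence in measure, so each $\{h:\Phi(h)\ge 1/k\}$ is closed. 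Its complement is dense: given $h_0\in L_0(Y)$ and $\delta,\epsilon>0$, the Borel set $\{(x,z): d(z,h_0(x))<\delta,\ z\notin\Gamma_{m,x}\}$ has nonempty (indeed open) vertical sections since each $\Gamma_{m,x}$ is nowhere dense, so a measurable uniformization yields $h\in L_0(Y)$ with $d(h(x),h_0(x))<\delta$ for every $x$ and $(x,h(x))\notin\Gamma_m$ for every $x$; then $h\in[h_0,\delta,\epsilon]$ and $\Phi(h)=0$. A closed set with dense complement is nowhere dense, so $C_m$ is meager, hence $\widetilde O\subseteq\bigcup_m C_m$ is meager, which finishes the proof.

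The main obstacle is Step 1: extracting the nowhere dense cover $G\cdot y\subseteq\bigcup_i g_i\cdot\overline{W_n\cdot y}$ uniformly and Borel-measurably in $y$. This is exactly where Effros' theorem is needed (to know that a meager orbit is small in the precise sense that some $\overline{W_n\cdot y}$ is nowhere dense), and where one has to be careful with the Effros Borel structure on $F(Y)$ and with the measurability of ``nowhere density''. By contrast, the reduction to $\widetilde O$ and the final covering argument are essentially bookkeeping; the one point of genuine content there is the upper semicontinuity of $\Phi$, which really does use the convergence-in-measure topology (via subsequences converging $\mu$-a.e.) rather than any product structure, since $L_0(Y)$ is not a product space.
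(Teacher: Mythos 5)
Your proof is correct and takes essentially the same route as the paper: you build a Borel-in-$x$ cover of each orbit $G\cdot f(x)$ by closed nowhere dense sets (the Effros/Baire fact underlying Lemma \ref{le:BorelCov}), use closedness of the measure-threshold sets (your upper semicontinuity of $\Phi$ is exactly Lemma \ref{closed}), and apply Jankov--von Neumann uniformization to produce a function close to a given one whose graph avoids the fibered cover. The differences are only bookkeeping: you cover with single translates $g_i\overline{W_n\cdot y}$ and thresholds $1/k$, verifying nowhere density of each closed piece directly, while the paper uses increasing finite unions with threshold $1-\epsilon$ and reaches a contradiction from a non-meager closed set $C_{n_0}$ having non-empty interior.
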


\begin{claim}\label{nonmeager} Let $f \in L_0(G)$ be given.
Suppose that there exist 
$A_0 \subseteq X$ of positive measure, and a non-meager not dense $B_0 \subseteq Y$ such that $B_0$ is the orbit of $f(x)$ for every $x \in A_0$. Then the orbit of $f$ in the induced action $L_0(G)\acts L_0(Y)$ is meager.
\end{claim}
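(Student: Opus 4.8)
The plan is to show that the entire orbit of $f$ in $L_0(Y)$ lies inside a single closed, nowhere dense subset of $L_0(Y)$; meagerness is then immediate (indeed this gives the orbit nowhere dense, which is stronger than needed).

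First, since $B_0$ is not dense in $Y$, its closure $\overline{B_0}$ is a proper closed subset, so I fix a point $y_0\in Y\setminus\overline{B_0}$. The value $f(x)$ lies in $Y$ and, for every $x\in A_0$, its $G$-orbit equals $B_0$; hence for an arbitrary $g\in L_0(G)$ and almost every $x\in A_0$ we have $(g.f)(x)=g(x).f(x)\in G\cdot f(x)=B_0\subseteq\overline{B_0}$. Therefore the orbit $\mathcal O:=\{g.f:g\in L_0(G)\}$ is contained in
\[ \mathcal C:=\{\,h\in L_0(Y):h(x)\in\overline{B_0}\text{ for a.e.\ }x\in A_0\,\}, \]
and it suffices to prove that $\mathcal C$ is closed and has empty interior.

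Closedness of $\mathcal C$ is routine: it is the preimage of $L_0(A_0,\mu\rest A_0;\overline{B_0})$ under the continuous restriction map $L_0(Y)\to L_0(A_0,\mu\rest A_0;Y)$, and the former is closed because $\overline{B_0}$ is closed in $Y$ (equivalently, from any sequence in $\mathcal C$ converging in measure extract an almost everywhere convergent subsequence and use that $\overline{B_0}$ is closed). For empty interior, I take an arbitrary $h_0\in L_0(Y)$ with a basic neighbourhood $[h_0,\delta,\epsilon]$, use non-atomicity of $\mu$ to choose a measurable $A_1\subseteq A_0$ with $0<\mu(A_1)<\epsilon$, and let $h'$ agree with $h_0$ off $A_1$ and be constantly equal to $y_0$ on $A_1$. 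Then $\{x:d(h'(x),h_0(x))\geq\delta\}\subseteq A_1$ has measure $<\epsilon$, so $h'\in[h_0,\delta,\epsilon]$, while $h'$ takes the value $y_0\notin\overline{B_0}$ on the positive-measure set $A_1\subseteq A_0$, so $h'\notin\mathcal C$. Hence $L_0(Y)\setminus\mathcal C$ is open and dense, $\mathcal C$ is nowhere dense, and $\mathcal O\subseteq\mathcal C$ is meager.

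I do not expect a genuine obstacle: the whole content is the observation that the orbit of $f$ is trapped in the ``tube'' $\mathcal C$ of functions taking values in $\overline{B_0}$ over $A_0$, and that it is the hypothesis ``$B_0$ not dense'' — and not ``$B_0$ non-meager'', which is not used in this claim, but which guarantees in the proof of Theorem~\ref{comea} that a non-meager non-dense orbit would otherwise be comeager — that makes $\mathcal C$ nowhere dense. The only mild care needed is the verification that $\mathcal C$ is closed in the topology of convergence in measure and the explicit one-set perturbation showing its interior is empty.
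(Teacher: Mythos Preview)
Your proof is correct and follows the same core idea as the paper: the orbit of $f$ is trapped in the ``tube'' of functions taking values in $\overline{B_0}$ over $A_0$, and this tube is closed and nowhere dense because $B_0$ is not dense. The paper packages this slightly differently: it first proves a separate lemma (Lemma~\ref{kreski}) that $L_0(B)$ is nowhere dense whenever $B$ is not dense, then identifies $L_0(Y)$ with the product $L_0(A_0,\mu_0;Y)\times L_0(A_1,\mu_1;Y)$ and invokes Kuratowski--Ulam to conclude that $L_0(A_0,\mu_0;B_0)\times L_0(A_1,\mu_1;Y\setminus B_0)$ is meager. Your direct argument with $\mathcal{C}$ avoids the case split on $\mu(X\setminus A_0)$, the product decomposition, and the appeal to Kuratowski--Ulam, and in fact yields the slightly stronger conclusion that the orbit is nowhere dense rather than merely meager. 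Your observation that the hypothesis ``$B_0$ non-meager'' is not used in this claim is also correct; the paper's proof does not use it either.
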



We will use the following lemma.
\begin{lemma}
\label{le:Orbit}
Let $Y$ be a Polish $G$-space and let  $ L_0(Y)$ be the induced Polish $ L_0(G)$-space.
 If  $f\in  L_0(Y)$, then 
 \[ L_0(G). f=\{g\in L_0(Y)\colon g(x)\in G. f(x) \text{ for almost all } x\in X\}.\]
\end{lemma}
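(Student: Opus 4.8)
The plan is to prove the two inclusions separately, the left-to-right one being essentially immediate and the right-to-left one being the substantive part. For the inclusion $L_0(G).f \subseteq \{g : g(x) \in G.f(x) \text{ a.e.}\}$: if $g = h.f$ for some $h \in L_0(G)$, then by the definition of the coordinatewise action we have $g(x) = h(x).f(x) \in G.f(x)$ for almost every $x$ (we only get ``almost every'' rather than ``every'' because the equality $g = h.f$ is an identity of $\mu$-equivalence classes), so this direction is purely a matter of unwinding definitions.

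For the converse, suppose $g \in L_0(Y)$ satisfies $g(x) \in G.f(x)$ for almost all $x \in X$; I want to produce $h \in L_0(G)$ with $h.f = g$, i.e. $h(x).f(x) = g(x)$ a.e. The natural approach is to invoke a measurable selection theorem. Consider the set
\[
R = \{(x,a) \in X \times G : a.f(x) = g(x)\}.
\]
Since the action map $G \times Y \to Y$ is continuous and $f, g$ are Borel, $R$ is a Borel subset of $X \times G$, and by hypothesis its projection to $X$ has full measure. Each vertical section $R_x$ is nonempty for $\mu$-a.e. $x$. By the Jankov--von Neumann uniformization theorem (or already the Kuratowski--Ryll-Nardzewski selection theorem, since the sections, being cosets of the stabilizer, are closed — or one can work on the full-measure Borel set where sections are nonempty), there is a $\mu$-measurable function $h : X \to G$ with $(x, h(x)) \in R$ for almost every $x$. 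Then $h \in L_0(G)$ and $h.f = g$ as elements of $L_0(Y)$, which gives $g \in L_0(G).f$.

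The main obstacle, and the only point requiring care, is the measurable selection step: one must check that $R$ is genuinely Borel (or at least analytic with Borel projection of full measure) so that a measurability-class-preserving uniformization applies, and that the resulting selector, defined only almost everywhere, can be completed to a genuine element of $L_0(Y)$ — this last point is harmless since $L_0(Y)$ consists of $\mu$-equivalence classes, so we may set $h$ to be any fixed constant on the null set where no selection was obtained. Everything else is bookkeeping with the definitions of the coordinatewise action and of $\mu$-equivalence. I would also remark that no hypothesis on $G$ or $Y$ beyond being Polish is needed, so the lemma holds in full generality.
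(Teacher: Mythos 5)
Your proposal is correct and follows essentially the same route as the paper: the forward inclusion by unwinding the coordinatewise action, and the reverse inclusion by uniformizing the set $\{(x,h)\in X\times G\colon h.f(x)=g(x)\}$ via the Jankov--von Neumann theorem to obtain the required $h\in L_0(G)$ (the paper merely notes this set is analytic, while you observe it is in fact Borel; either suffices). The additional remarks about closed sections and completing the selector on a null set are fine but not needed beyond what the paper does.
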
 
\begin{proof}
 The proof goes along the lines of the beginning of the proof of Theorem 6 in \cite{KM}. For completeness, we will sketch it here.
 The inclusion $\subseteq$ is clear. For the opposite inclusion, suppose that a measurable function $g$ is such that 
 $g(x)\in G. f(x) \text{ for almost all } x\in X$. Without loss of generality, $g$ is Borel and for every $x\in X$,  $g(x)\in G. f(x)$.
 Consider the analytic set $S=\{(x,h)\in X\times G\colon g(x)=h. f(x)\}$ and using the Jankov-von Neumann uniformization theorem,
 obtain a measurable map $\phi\in L_0(G)$ whose graph is contained in $S$, that is, $g=\phi. f$, so it belongs to $L_0(G). f$.
 \end{proof}

\subsection{Proof of Claim \ref{meager}}\label{c1}

We start with the following observations.
 
\begin{lemma}
\label{le:BorelCov}
Let $X$ be a Polish space and let $Y$ be a Polish $G$-space.
For any Borel function $f\colon X\to Y$ such that $G.f(x)$ is meager for each $x\in X$ there exists a Borel function $F\colon X\to \f(Y)^\NN$ such that, for each $x\in X$, $F(x)$ is an increasing sequence of nowhere dense sets whose union covers $G. f(x)$.
 \end{lemma}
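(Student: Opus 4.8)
The statement to prove is Lemma~\ref{le:BorelCov}: given a Borel $f\colon X\to Y$ with $G.f(x)$ meager for every $x$, we want a Borel map $F\colon X\to\f(Y)^\NN$ assigning to each $x$ an increasing sequence of nowhere dense closed sets whose union covers the orbit $G.f(x)$. The idea is to produce such a covering sequence \emph{uniformly and definably} in $x$, so the main work is a uniformization/selection argument rather than any topological subtlety: for a single meager orbit $G.f(x)$ one of course has a countable increasing family of nowhere dense closed sets covering it (that is just the definition of meager, after taking closures), but we need to choose it Borel-measurably in $x$.

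\textbf{Step 1: Reduce to selecting Baire-category witnesses.}
The plan is to invoke the theorem on definability of category (the Kuratowski--Ulam / Montgomery--Novikov style result; see e.g.\ Kechris, \emph{Classical Descriptive Set Theory}, 16.1 and 22.22) which says: if $A\subseteq X\times Y$ is Borel, then $\{x : A_x \text{ is meager}\}$ is Borel, and moreover there is a Borel map assigning to each such $x$ a suitable witness. Concretely I would take the Borel set
\[
A=\{(x,y)\in X\times Y : y\in G.f(x)\},
\]
which is Borel (indeed, by Lemma~\ref{le:Orbit}-type reasoning the orbit map is nice; alternatively the relation ``$y\in G.f(x)$'' is analytic as $\exists h\in G\ (y=h.f(x))$, and one argues on this analytic set using the Jankov--von Neumann uniformization exactly as in the proof of Lemma~\ref{le:Orbit} to reduce to a Borel graph). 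Each vertical section $A_x=G.f(x)$ is meager by hypothesis.

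\textbf{Step 2: Extract an increasing sequence of nowhere dense closed sets, Borel in the parameter.}
Fix a countable basis $\{V_m\}$ of $Y$. For a meager set $A_x$, by Baire category $A_x\subseteq\bigcup_n C_n^x$ with each $C_n^x$ closed nowhere dense. The definability-of-category machinery gives this uniformly: there is a Borel map $x\mapsto (C_n^x)_{n\in\NN}\in\f(Y)^\NN$ (using the standard Effros Borel structure on $\f(Y)$) with $C_n^x$ closed nowhere dense and $A_x\subseteq\bigcup_n C_n^x$. I would then replace $C_n^x$ by $D_n^x=\overline{C_0^x\cup\cdots\cup C_n^x}$; a finite union of nowhere dense sets is nowhere dense, and its closure is nowhere dense and closed, and $x\mapsto D_n^x$ remains Borel because finite union and closure are Borel operations on the Effros structure. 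Setting $F(x)=(D_n^x)_{n}$ gives the desired increasing sequence covering $G.f(x)$, and $F$ is Borel as a composition of Borel maps. (If one wants to avoid black-boxing the full definability-of-category theorem, an alternative is: for each basic open $V_m$ the set $\{x : V_m\cap G.f(x) \text{ is not comeager in } V_m\}$ is Borel, and on this set one can Borel-select a basic open subset of $V_m$ disjoint from $G.f(x)$ on a comeager-in-that-subset set, then build the $C_n^x$ by a standard fusion indexed by finite sequences of basic opens — but this just re-derives the cited theorem.)

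\textbf{Main obstacle.}
The only real difficulty is the measurable selection: making the choice of the covering sequence depend Borel-measurably on $x$, rather than just pointwise. Everything else (finite unions and closures of nowhere dense sets are nowhere dense; the Effros Borel structure on $\f(Y)$ is standard) is routine. I expect to handle the selection by citing the definability-of-category theorem together with Jankov--von Neumann uniformization (already used in the excerpt for Lemma~\ref{le:Orbit}), which together reduce the orbit relation to a Borel set and then produce the Borel assignment $x\mapsto(C_n^x)_n$.
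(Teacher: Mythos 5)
There is a genuine gap, and it sits exactly at the step you call ``the only real difficulty.'' The parametrized selection you invoke is not delivered by the results you cite. Kechris 16.1 (Montgomery--Novikov) and the category-quantifier results only give Borelness of sets like $\{x : A_x \text{ is meager}\}$ for \emph{Borel} $A$; they do not produce a Borel map $x \mapsto (C^x_n)_n$ choosing covers by nowhere dense closed sets. Jankov--von Neumann does not apply either: the natural selection set $\{(x,(K_n)_n) \in X \times \f(Y)^\NN : \text{each } K_n \text{ nowhere dense and } G.f(x) \subseteq \bigcup_n K_n\}$ involves a universal quantifier over $y$, so it is coanalytic, not analytic. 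Moreover your starting set $A=\{(x,y): y \in G.f(x)\}$ is only analytic; the appeal to ``Lemma \ref{le:Orbit}-type reasoning'' or Jankov--von Neumann to ``reduce to a Borel graph'' gives a measurable choice of group elements, not Borelness of the orbit relation, so even 16.1 does not apply to it as stated. Your fallback sketch also fails: you propose to Borel-select basic open sets disjoint from $G.f(x)$, but a meager orbit can be (and in the relevant applications typically is) dense in $Y$, in which case no nonempty open set misses it; covering a meager set by closed nowhere dense sets is not achieved by finding open sets disjoint from it. So the heart of the lemma --- the uniform, definable construction of the covers --- is black-boxed by citations that do not contain it.

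The paper's proof avoids any general parametrized covering theorem by exploiting the group structure, and this is the idea missing from your proposal. Since $G.f(x)$ is meager, there is a basic open neighborhood $U$ of the identity in $G$ with $\overline{U.f(x)}$ nowhere dense (the standard fact that an orbit is non-meager iff $\overline{U.y}$ has nonempty interior for every neighborhood $U$ of $1_G$). The paper checks two Borelness facts: the set of nowhere dense closed sets is Borel in $\f(Y)$, and for each fixed open $U \subseteq G$ the map $y \mapsto \overline{U.y}$ is Borel (using a countable dense subset of $U$). Hence choosing the \emph{least} basic neighborhood $U_n$ with $\overline{U_n.f(x)}$ nowhere dense is a Borel assignment $x \mapsto M_f(x) \in \f(Y)$; then, taking a countable dense set $\{g_i\}$ in $G$, the sets $F(x)(n)=\{g_1,\dots,g_n\}.M_f(x)$ form the required increasing Borel-in-$x$ sequence of nowhere dense closed sets covering $G.f(x) \subseteq \bigcup_i g_i.\overline{U_n.f(x)}$. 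If you want to rescue your route, you would have to actually prove a uniformization theorem for covers of analytic sets with meager sections, which is a substantially harder (and here unnecessary) task than the lemma itself.
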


\begin{proof}
First, we show that the set of all closed nowhere dense sets in $Y$ is Borel in the standard Borel space $\f(Y)$ of closed subsetes of $Y$. Indeed, we know that the relation $R(F_1,F_2)$ iff $F_1\subseteq F_2$ is a Borel  subset of $\f(Y)\times \f(Y)$ (see \cite{K}, page 76). Enumerate a countable  basis of open sets  in $Y$ into $\{ U_n \}$. Then
\begin{equation*}
\begin{split}
F \in \mbox{NWD }  \iff  &  \forall_n\exists_m\  U_m\subseteq U_n \text{ and } F\cap U_m=\emptyset \\
  \iff&\forall_n\exists_m \ Y\setminus U_n\subseteq Y\setminus U_m \text{ and } F\subseteq  Y\setminus U_m\\
 \iff&\forall_n\exists_m \ R(Y\setminus U_n, Y\setminus U_m) \text{ and } R(F, Y\setminus U_m).
\end{split}
\end{equation*}

Next, we show that for every open neighborhood $U$ of $e$ in $G$, the function that takes  $y\in Y$ and assigns to it $\overline{U. y}$ is Borel. Indeed, let $(g_n)$ be a dense set in $U.$ As
\[  \overline{U. y}\cap W \neq \emptyset\ \iff\ U. y\cap W \neq \emptyset \ \iff   \  \exists_n g_n.y \in W,\]
we get that \[\{y\in Y\colon\{ \overline{U. y}\cap W \neq \emptyset\}\}\] is Borel for every $W$ be open in $Y$. 
 
Finally, let $(U_n)$ enumerate a countable basis of open sets in $Y$. For each $n$, the function $H_n$ that takes  $y\in Y$ and assigns to it $\overline{U_n. y}$ is Borel,
 and so the function $H\colon Y\to \f(Y)^\NN $ given by $(H(y))(n)=H_n(y)$ is Borel.
 As for every open $U$ the set $\{y\in Y\colon \overline{U. y}\in\text{NWD} \}$ is Borel,
the function $M\colon Y\to \f(Y)$ given by $M(y)=\overline{U_n. y}$ where $n$ is the least such that $U_n. y$, or equivalently
 $\overline{U_n. y}$, is nowhere dense, is Borel. Therefore the function $M_f=M\circ  f$ is Borel.
 Let $\{g_n\}$ enumerate a dense set in $G$.
 Then the function $F\colon X\to \f(Y)^\NN$ given by $F(x)(n)=\{g_1,g_2,\ldots, g_n\}. M_f(x)$ is as required (the map $(F_1,F_2) \mapsto F_1 \cup F_2$ is also Borel).

\end{proof}
 
 Lemma \ref{closed} generalizes Lemma 2 from \cite{KM}, which (after rephrasing) is for $F$ which is constant.

 \begin{lemma}\label{closed}
 Let $F\colon X\to \f(Y)$ be Borel. Then for any $0\leq a\leq 1$ the set 
 \[C=\{g\in L_0(Y)\colon \mu(\{x\in X\colon g(x)\in F(x)\})\geq a\}\]
 is closed in $L_0(Y)$.
 \end{lemma}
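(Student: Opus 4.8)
The plan is to show the complement is open, i.e., that if $g \notin C$ then some basic neighborhood $[g,\delta,\epsilon]$ of $g$ is also disjoint from $C$. Fix $g$ with $\mu(\{x : g(x) \in F(x)\}) < a$; write $b = \mu(\{x : g(x) \in F(x)\})$, so $b < a$. I would first exploit the fact that $F(x)$ is closed for every $x$: the set $\{x : g(x) \notin F(x)\}$ has measure $1-b > 1-a$, and for each such $x$ there is a positive distance $r(x) = d(g(x), F(x)) > 0$ between $g(x)$ and the closed set $F(x)$. By countable additivity (partitioning according to, say, $r(x) > 1/k$), there is a set $A \subseteq \{x : g(x) \notin F(x)\}$ and a number $\delta > 0$ with $d(g(x), F(x)) > \delta$ for all $x \in A$ and $\mu(A) > 1 - a$; more precisely I would arrange $\mu(A) = 1 - a + 2\eta$ for some small $\eta > 0$, which is possible since $\mu(\{x : g(x)\notin F(x)\}) = 1 - b > 1 - a$.

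Now I claim the neighborhood $[g, \delta, \eta]$ — the set of $h$ with $\mu(\{x : d(h(x), g(x)) < \delta\}) > 1 - \eta$ — is disjoint from $C$. Indeed, suppose $h \in [g,\delta,\eta]$. Let $D = \{x : d(h(x), g(x)) < \delta\}$, so $\mu(D) > 1 - \eta$. On the set $A \cap D$, for each $x$ we have $d(h(x), g(x)) < \delta < d(g(x), F(x))$, which forces $h(x) \notin F(x)$ (triangle inequality: if $h(x) \in F(x)$ then $d(g(x),F(x)) \le d(g(x),h(x)) < \delta$, contradiction). Hence $\{x : h(x) \in F(x)\} \subseteq X \setminus (A \cap D)$, and $\mu(X \setminus (A \cap D)) \le (1 - \mu(A)) + (1 - \mu(D)) < (a - 2\eta) + \eta = a - \eta < a$. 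So $h \notin C$, proving $C$ is closed.

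The only genuine subtlety — and I expect it to be the main point to get right — is the passage from the pointwise condition $d(g(x), F(x)) > 0$ (which holds because each $F(x)$ is closed and $g(x) \notin F(x)$) to a uniform lower bound $\delta$ on a set of measure exceeding $1 - a$. This is just the exhaustion argument: $\{x : g(x)\notin F(x)\} = \bigcup_k \{x : d(g(x),F(x)) > 1/k\}$ is an increasing union, so some term has measure arbitrarily close to $\mu(\{x : g(x)\notin F(x)\}) > 1-a$, and we pick $\delta = 1/k$ for that $k$. One should also note in passing that the function $x \mapsto d(g(x), F(x))$ is measurable — this follows from the measurability of $g$ together with the Borel measurability of the map $(y, C') \mapsto d(y, C')$ on $Y \times \mathcal F(Y)$, or alternatively one can sidestep it entirely by working with $\{x : g(x) \notin F(x)\}$ directly as a measurable set (it is the preimage under $x \mapsto (g(x), F(x))$ of the Borel "non-membership" relation in $Y \times \mathcal F(Y)$) and only invoking the exhaustion via the basic open sets $U_n$ of $Y$. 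Everything else is routine; no appeal to the group structure or to Lemma~\ref{le:Orbit} is needed here.
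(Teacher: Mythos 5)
Your proof is correct and rests on the same core step as the paper's: your exhaustion sets $\{x : d(g(x),F(x))>1/k\}$ are exactly the paper's sets $B_k=\{x\in X\setminus A_g : U_{1/k}(g(x))\cap F(x)=\emptyset\}$, and the concluding measure estimate (a set of measure $>1-a$ on which nearby functions still avoid $F(x)$) is the same. The only difference is packaging: you show the complement is open on a basic neighborhood $[g,\delta,\eta]$, while the paper argues sequentially by contradiction (even passing to an almost surely convergent subsequence it does not actually use), so there is no gap — your remarks on the measurability of $x\mapsto d(g(x),F(x))$ address a point the paper glosses over as well.
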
 
\begin{proof}
Let $(g_n)$ be a sequence in $C$ that converges to some $g\in L_0(Y)$. Since every sequence convergent in measure contains a subsequence that converges
almost surely, without loss of generality we can assume that $(g_n)$ converges almost surely to $g$.

Suppose towards a contradiction that $g\notin C$. That means that for $A_g=\{x\in X\colon g(x)\in F(x)\}$ we have $\mu(A_g)<a$. 
Let $B_k=\{x\in X\setminus A_g\colon U_{\frac{1}{k}}(g(x))\cap F(x)=\emptyset\}$, where $U_\epsilon(x)$ is the ball in $Y$ which is centered at $x$ and has radius $\epsilon$. Then $(B_k)$ is an increasing sequence of measurable sets whose union is $X\setminus A_g$, which implies that $\mu(B_k)$ converges to  $1-\mu(A_g)$.
Take $k_0$ such that $\mu(A_g)+\mu((X\setminus A_g)\setminus B_{k_0})<a$, and take $g_l$ such that 
$\mu(\{x\colon d(g_l(x), g(x))\geq\frac{1}{k_0} \})+\mu(A_g)+\mu((X\setminus A_g)\setminus B_{k_0})<a$.
 Then $g_l\notin C$. A contradiction.

\end{proof}


\begin{proof}[Proof of Claim \ref{meager}]
Fix $f\in L_0(Y)$, without loss of generality it is Borel. We show that its orbit $\{g\in L_0(Y)\colon g(x)\in G. f(x) \text{ for almost all } x\in X\}$ (see Lemma \ref{le:Orbit}) is meager in $L_0(Y)$. Using Lemma \ref{le:BorelCov}, fix a Borel $F\colon X\to \f(Y)^\NN$ such that for each $x\in X$, $F(x)$ is an increasing sequence of nowhere dense sets whose union covers $G. f(x)$,
 for readability denote $(F(x))(n)$ by $F_n(x)$.
Fix some $0<\epsilon<1$.
For every $n$ the set 
\[C_{n}= \{g\in L_0(Y)\colon  \mu\{x\in X \colon g(x)\in F_{n}(x)\}\geq 1-\epsilon\}  \]
is closed in $L_0(Y)$, as Lemma \ref{closed} indicates.

Suppose towards a contradiction that $L_0(G). f$  is non-meager. Then since $L_0(G). f\subseteq \bigcup_n C_n$,
there is $n_0$ such that $C_{ n_0}$ is non-meager. As $C_{n_0}$ is closed and non-meager it has a non-empty interior, and hence there 
is a non-empty open set 
\[[s,\delta]= \{ g\in L_0(Y) \colon \mu(\{x\in X \colon d(g(x),s(x))<\delta\})>1-\delta \} \subseteq C_{n_0},\] where
 $s$ is a step function.
 Consider the Borel set with open vertical sections
 \[ W=\{ (x,y)\colon d(s(x), y)<\delta\}.\]
 

But then the Borel set $B=\{(x,y)\colon y\in F_{n_0}(x)\}\cap W$, viewed as a subset of $W$, has all vertical sections meager.
By the uniformization theorem for Borel sets with all sections non-meager, or by the Jankov-von Neumann uniformization theorem (see \cite{K}), 
there exists a measurable function $h\colon X\to Y$ whose graph is contained
in $W\setminus B$. In particular, $h\in [s,\delta]$ and
$h\notin C_{n_0}$, which gives a contradiction.

\end{proof}


\subsection{Proof of Claim \ref{nonmeager}}\label{c2}

Let us start with some observations.


\begin{lemma}\label{kreski}
Suppose that a set $B$ is not dense in $Y$. Then $L_0(B)$ is nowhere dense.
\end{lemma}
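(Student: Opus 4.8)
The plan is to show that the closure of $L_0(B)$ has empty interior. Since $B$ is not dense in $Y$, fix a nonempty open set $U \subseteq Y$ with $U \cap \overline{B} = \emptyset$; by shrinking, we may take $U = U_\eta(y_0)$ to be an open ball of some radius $\eta > 0$ centered at a point $y_0 \in Y$ whose $\eta$-ball misses $B$, so that $d(y,b) \geq \eta$ for all $y \in U$ and $b \in B$ (after possibly halving $\eta$).

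First I would observe that $L_0(B)$ is contained in the closed set $C = \{g \in L_0(Y) : g(x) \in \overline{B} \text{ for almost all } x\}$; indeed this is an instance of Lemma \ref{closed} with the constant Borel map $F(x) = \overline{B}$ and $a = 1$, so it suffices to show $C$ is nowhere dense, i.e. (being closed) that $C$ has empty interior. So suppose toward a contradiction that some basic open set $[s,\delta] \subseteq C$, where $s$ is a step function and $0 < \delta < \eta$. Then I would construct a single function $g \in [s,\delta] \setminus C$: let $g$ be the constant function with value $y_0$ if $y_0$ happens to satisfy $d(s(x), y_0) < \delta$ on a large set — more robustly, define $g(x)$ to equal some point of $U_\delta(s(x))$ that also lies in $U = U_\eta(y_0)$ whenever such a point exists, and $g(x) = s(x)$ otherwise. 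Since $s$ takes finitely many values, it is easier still: simply take $g$ to be the constant function $y_0$. For $g \equiv y_0$ to lie in $[s,\delta]$ we need $\mu(\{x : d(s(x), y_0) < \delta\}) > 1 - \delta$, which need not hold for our given $s$; so instead I would perturb only where needed.

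The cleanest route: since $[s,\delta]$ is open and nonempty, it suffices to exhibit \emph{one} element of $L_0(Y)$ in $[s,\delta]$ whose values avoid $\overline B$ on a set of positive measure, contradicting $[s,\delta]\subseteq C$. Define $g(x) = y_0$ for all $x$ in the (positive-measure, in fact we can take it with measure as close to $1$ as we like by standard density of step functions, but we only need positive measure) set $A = \{x : d(s(x), y_0) < \delta\}$ if $\mu(A) > 0$; the point is that we get to \emph{choose} $s$ — no: $s$ is given. So instead pick any value $a^k$ attained by $s$ on an interval $I_k$ of positive measure, and note $U_\delta(a^k)$ is a nonempty open ball; since $\delta$ was arbitrary we cannot assume it meets $U$. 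Here is the fix that works uniformly: replace the claim "$L_0(B)$ nowhere dense" by first noting $L_0(B) \subseteq C$ with $C$ closed, then showing $C \neq L_0(Y)$ and more: for \emph{every} step function $s$ and every $\delta > 0$, the ball $[s,\delta]$ is not contained in $C$, because we can modify any $g$ on a tiny set of positive measure $\rho$ (with $\rho < \delta$) — changing $g$ on a set of measure $< \delta$ keeps it in $[s,\delta]$ relative to any center, and on that small set we set the value to $y_0 \in U$, forcing $g \notin C$. This uses only that $Y \setminus \overline B \neq \emptyset$ and that basic neighborhoods are insensitive to changes on sets of small measure.

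So the key steps, in order: (1) from non-density of $B$ extract a point $y_0$ and radius $\eta$ with $U_\eta(y_0) \cap B = \emptyset$; (2) invoke Lemma \ref{closed} to see $L_0(B) \subseteq C := \{g : g(x) \in \overline B \text{ a.e.}\}$ is contained in a closed set; (3) show $C$ has empty interior by taking an arbitrary basic neighborhood $[s,\delta]$, picking $g \in [s,\delta]$ (e.g. $g$ close to $s$, or even $g = s$ if $s \in [s,\delta]$, which holds since $\mu\{d(s,s)<\delta\} = 1 > 1-\delta$), and modifying $g$ on a subset $E$ of positive measure $< \min(\delta, \eta)$ by setting $g|_E \equiv y_0$, so that the modified $g'$ still lies in $[s,\delta]$ but $g'(x) = y_0 \notin \overline B$ on $E$, hence $g' \notin C$; (4) conclude $[s,\delta] \not\subseteq C$, so $C$ is closed with empty interior, thus nowhere dense, and therefore $L_0(B)$ is nowhere dense. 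The main obstacle is the bookkeeping in step (3) — verifying that the measure-theoretic modification lands inside the prescribed neighborhood — but this is routine: modifying on a set of measure $< \delta$ changes the measure $\mu\{x : d(g'(x), s(x)) < \delta\}$ by at most $\delta$, and it was $>1-\delta$ (in fact could be arranged $\approx 1$), so the inequality is preserved; one should just be slightly careful to start from a $g$ for which the measure is $> 1 - \delta + \mu(E)$, which is arranged by taking $g = s$ (measure $1$) when $s$ is itself in $L_0(Y)$, or $g$ sufficiently close otherwise.
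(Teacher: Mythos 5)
Your argument is correct and is essentially the paper's proof: both reduce to the closed set $\overline B$ (you via Lemma \ref{closed} with constant $F\equiv\overline B$ and $a=1$, the paper via the a.e.-convergent-subsequence argument for closedness of $L_0(\overline B)$), and both then kill the interior by modifying a function on a set of small positive measure so that it takes a value outside $\overline B$, which keeps it in the given basic neighborhood $[s,\delta]$. The final summarized steps (1)--(4) are sound (the radius $\eta$ is not even needed, only a point $y_0\notin\overline B$), so apart from the exploratory detours the proposal matches the paper.
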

\begin{proof}
As the closure of $B$ is also not dense in $Y,$ without loss of generality, we can assume that $B$ is closed.
Then the set $L_0(B)$  is closed. Indeed, it follows from the fact that every sequence convergent in measure has a subsequence which converges almost surely.
It is therefore enough to show that $L_0(B)$ does not contain any open set. 

 Take $f\in L_0(B)$ and $p\notin B$, and let
 $\epsilon>0$. We show that $[f,\epsilon]$ is not contained in  $L_0(B)$. 
 For that pick any Borel set $A\subseteq X$ satisfying $0<\mu(A)<\epsilon$ and take 
 \begin{equation*}
 g(x)=
 \begin{cases}
 f(x) & \text{ if } x\notin A\\
 p & \text{ if } x\in A.
 \end{cases}
 \end{equation*}
 Then $g\in [f,\epsilon]$ and $g\notin  L_0(B)$. 
\end{proof}

\begin{proof}[Proof of Claim \ref{nonmeager}]
Fix $f\in L_0(Y)$, without loss of generality it is Borel. We show that its orbit $\{g\in L_0(Y)\colon g(x)\in G. f(x) \text{ for almost all } x\in X\}$ is meager in $L_0(Y)$.



If $X \setminus A_0$ has measure $0$, the orbit of $f$ is contained in $L_0(A_0,\mu_0; B_0)$, and we are done by Lemma \ref{kreski}. Otherwise, put $A_1=X \setminus A_0$, $B_1=Y \setminus B_0$. We have that for $i=0,1$, $\mu_i=\frac{\mu\restriction A_i}{\mu(A_i)}$ is a probability measure on $A_i$. The space $L_0(Y)$ can be identified in a natural way with the space
$\prod_i L_0(A_i,\mu_i; Y)$;  to  $f\in L_0(Y)$ corresponds $(f\restriction A_i)\in \prod_i L_0(A_i,\mu_i; Y)$. 
Then the orbit of $f$ is contained in
\[\prod_i L_0(A_i,\mu_i; B_i).\]
As $L_0(A_0,\mu_0; B_0)$ is nowhere dense, by the Kuratowski-Ulam theorem, its product with $L_0(A_1,\mu_1; B_1)$ is a meager set, and so is the orbit of $f$.
\end{proof}

\section{Topological similarity classes}

Let $G$ be a topological group. For an $n$-tuple $\bar{f}=(f_1,\ldots,f_n)$, we let $\gen{\bar{f}}$  denote the countable subgroup of $G$ generated by the set $\{f_1,\ldots,f_n\}$. Recall from the introduction that the topological $n$-similarity class of $\bar{f}$ is the set of all the $n$-tuples $\bar{g}=(g_1,\ldots,g_n)$ in $G$ such that the map sending $f_i \mapsto g_i$ extends (necessarily uniquely) to a bi-continuous isomorphism between the subgroups $\gen{\bar{f}}$ and $\gen{\bar{g}}$ of $G$ generated by these tuples. Observe that topological similarity classes, being coanalytic sets, always have the Baire property.

Let $F_n=F_n(s_1, \ldots, s_n)$ denote the free group on $n$ generators $s_1, \ldots, s_n$. By a reduced word in $F_n$, we mean a sequence of generators $s_{i_0}\ldots s_{i_k}$ with no element followed by its inverse. 
For $u,v \in F_n$, let $uv$ be the word obtained by concatenating $u$ and $v$. For $w \in F_n$, and an $n$-tuple $\bar{f}$ in $G$, the evaluation $w(\bar{f})$ denotes the element of $G$ obtained from $w$ by substituting $f_i$ for $s_i$, and performing the group operations on the resulting sequence.

By a \emph{partial automorphism} of a structure $M$, we mean a mapping $p\colon A \rightarrow B$, where $A, B \subseteq M$ are finite, that can be extended to an automorphism of $M$. Note that we do not require $A,B$ to be substructers, so, in fact, partial automorphisms in this sense are just names for elements of an open basis in $\Aut(M)$.
For an $n$-tuple of partial automorphisms $\bar{p}$ of $M$, we denote by $[\bar{p}]$  the open set of all those $\bar{g} \in \Aut(M)^n$ that extend $\bar{p}$. If $G$ (respectively $L_0(G)$) is equipped with a metric,  $g \in G$  (respectively $L_0(G)$),  and 
$\epsilon>0$, then $U_{\epsilon}(g)$ denotes the $\epsilon$-neighborhood of $g$ in $G$  (respectively $L_0(G)$). 

\begin{lemma}
\label{le:NonDisc}
Let $G$ be a Polish group, and, for a fixed $n $, let $\{u_k\}$ be a sequence of reduced words in $F_n$. For every $n$-tuple $\bar{f}$ in $G$ such that $\gen{\bar{f}}$ is non-discrete in $G$ there exists a sequence of reduced words $w_k=v_ku_k$ such that $w_k(\bar{f}) \rightarrow e$.
\end{lemma}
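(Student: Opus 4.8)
I want to find, for each prescribed suffix word $u_k$, a prefix $v_k$ so that $w_k = v_k u_k$ (a genuine concatenation, producing a reduced word that literally ends in $u_k$ after reduction) satisfies $w_k(\bar f) \to e$. The key point, which I must respect, is that $v_k$ is allowed to depend on $u_k$ but the concatenation $v_k u_k$ must be reduced \emph{as written} — I cannot let $v_k$ cancel into $u_k$ and thereby cheat. The natural idea is: since $\gen{\bar f}$ is non-discrete, the identity $e$ is a non-isolated point of $\gen{\bar f}$, so there is a sequence of \emph{nontrivial} elements $t_j(\bar f) \to e$ with $t_j \in F_n$ reduced and nonempty. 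I want to use these to ``approximately invert'' the fixed element $u_k(\bar f)$.

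\textbf{Key steps.} First I would fix $k$ and set $a_k = u_k(\bar f) \in G$. I seek a reduced word $v_k$ such that $v_k(\bar f)$ is close to $a_k^{-1}$ and such that $v_k u_k$ is reduced as written, i.e.\ the last letter of $v_k$ is not the inverse of the first letter of $u_k$. To build $v_k$ I use non-discreteness: choose $t_j(\bar f) \to e$ with $t_j$ nonempty reduced words. The group operations are continuous, so $u_k(\bar f)\, t_j(\bar f)\, u_k(\bar f)^{-1} \to e$, meaning the conjugates $c_j := u_k t_j u_k^{-1}$ (reduced appropriately) evaluate near $e$. The cleanest route is to take $v_k := u_k t_{j(k)} u_k^{-1}$ for a suitable index $j(k)$, so that $w_k = v_k u_k$ reduces to $u_k t_{j(k)}$ (the trailing $u_k^{-1} u_k$ cancels). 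Then $w_k(\bar f) = a_k\, t_{j(k)}(\bar f) \to e$ provided $t_{j(k)}(\bar f) \to a_k^{-1}$ — which is \emph{not} what I have; I only have $t_j(\bar f)\to e$. So this naive conjugation fails, confirming I must be more careful.

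\textbf{The correct mechanism.} Instead I approximate $a_k^{-1}$ directly. Non-discreteness gives that $\gen{\bar f}$ has no isolated points, hence is dense-in-itself, so every element — in particular $a_k^{-1} = u_k(\bar f)^{-1}$ — is a limit of \emph{other} elements of $\gen{\bar f}$. Thus there are reduced words $r_m$ with $r_m(\bar f) \to a_k^{-1}$ and $r_m(\bar f) \neq a_k^{-1}$. The obstacle is purely combinatorial: a given $r_m$ might, when concatenated with $u_k$, cancel all the way (e.g.\ if $r_m = u_k^{-1}$ literally). I avoid this as follows. Pick $r_m(\bar f) \to a_k^{-1}$; if the last letter of the reduced form of $r_m$ clashes with the first letter of $u_k$, I instead use $r_m' := r_m \cdot (s_i s_i^{-1})$ is illegitimate, so rather I multiply on the right by a short conjugating word that evaluates near $e$: choose $t(\bar f)$ near $e$ with suitable leading letter and set $v_k := r_m t$ (reduced), arranging the last letter of $v_k$ to differ from the inverse of $u_k$'s first letter. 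Since $r_m(\bar f)\,t(\bar f) \to a_k^{-1}\cdot e = a_k^{-1}$, we get $w_k(\bar f) = v_k(\bar f)\, a_k \to a_k^{-1} a_k = e$, and by construction $v_k u_k$ is reduced as written. The main obstacle, and the part requiring genuine care, is this bookkeeping on first/last letters: I must always be able to adjust the tail of $v_k$ — using the non-discreteness-supplied words near $e$ with at least two distinct available leading generators (available because $n\ge 1$ and $\gen{\bar f}$ is non-discrete, so infinitely many distinct short words cluster at $e$) — so that no unwanted cancellation with $u_k$ occurs while keeping $v_k(\bar f)$ arbitrarily close to $a_k^{-1}$.
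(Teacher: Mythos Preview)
Your approach is correct and, once unwound, coincides with the paper's. The paper's argument is simply your ``correct mechanism'' with the most natural choice $r_m = u_k^{-1}$ (which hits $a_k^{-1}$ exactly, not just approximately): it sets $v_k$ to be the reduction of $u_k^{-1} c$ for some word $c$ with $c(\bar f)$ near $e$, so that $w_k = u_k^{-1} c\, u_k$ is a conjugate of $c$ and hence $w_k(\bar f) \to e$ automatically. The last-letter bookkeeping is then handled (for $n \ge 2$) by choosing $c$ among $v'_{j}$, $s_2 v'_{j} s_2^{-1}$, $s_2^{-1} v'_{j} s_2$ and checking by a case analysis on the first and last letters of $v'_j$ that at least one of these, after reducing $u_k^{-1} c$, does not end in the inverse of the first letter of $u_k$. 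Note that your ``failed'' first attempt was one sign away from this: you tried $v_k = u_k t u_k^{-1}$ instead of $v_k = u_k^{-1} t$.

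Two small points to clean up in your version. First, to make $v_k = r_m t$ end correctly you must control the \emph{last} letter of $t$ (and also its first letter, to avoid absorption into $r_m$), not just its ``leading letter'' as you wrote. Second, your remark about ``at least two distinct available leading generators (available because $n\ge 1$\ldots)'' is wrong when $n=1$: with a single generator there is no room for the letter-choice trick, and the case must be handled separately (the paper simply notes it is clear, since one just needs arbitrarily large powers of $f_1$ of a prescribed sign to approach $e$).
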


\begin{proof}
If $n=1$, the conclusion is clear.

For the rest of the proof, suppose that $n\geq 2$. Fix a neighborhood basis at the identity $\{V_k\}$ in $G$. For $\bar{f} \in G^n$ such that $\gen{\bar{f}}$ is non-discrete, fix reduced words $\{v'_k\}$ in $F_n(s_1, \ldots, s_n)$ such that $v'_k(\bar{f}) \rightarrow e$. 
Note that $s_i^{-1}v'_ks_i(\bar{f}) \rightarrow e$ and $s_iv'_ks_i^{-1}(\bar{f}) \rightarrow e$, 
for $i=1,\ldots, n$. Put in one sequence all $\{s_i^{-1} v'_k s_i\}$, $\{s_i v'_k s_i^{-1}\}$,  as well as $\{v'_k\}$. 

Fix $k_0 \in \NN$, and suppose that the word $u_{k_0}$ is of the form $s_1 u'_{k_0}$. There exists $k_1 \in \NN$ such that
%
\[ u_{k_0}^{-1} v'_{k_1} u_{k_0}(\bar{f}) \in V_{k_0}\]
as well as 
\[ u_{k_0}^{-1} s_2^{-\epsilon}v'_{k_1}s_2^{\epsilon} u_{k_0}(\bar{f}) \in V_{k_0}\]
for $\epsilon=-1,1$.

At least one of $u_{k_0}^{-1} s_2^{-1}v'_{k_1}s_2$, $u_{k_0}^{-1} s_2v'_{k_1}s_2^{-1}$, or $u_{k_0}^{-1} v'_{k_1}$, after reducing, does not end in $s_1^{-1}$
(one has to check a number of cases here: the reduction of $v'_{k_1}$ can start in $s_1$, $s_1^{-1}$, $s_2$, $s_2^{-1}$ or in $s_i^{\epsilon}$, $i>2$,
and it can end in $s_1$, $s_1^{-1}$, $s_2$, $s_2^{-1}$ or in $s_i^{\epsilon}$, $i>2$).
Call that reduced word $v_{k_0}$. Then the word $v_{k_0} u_{k_0}$ is reduced.


In an analogous manner we deal with the case that $u_{k_0}=s_i^\epsilon u'_{k_0}$ for other $i \leq n$, $\epsilon=-1,1$. Thus, we obtain reduced words $w_k=v_ku_k$, $k \in \NN$, such that $w_k(\bar{f}) \rightarrow e$.
\end{proof}

For a structure $M$, and finite subset $A \subseteq M$, let $\aut_A(M)$ denote the subgroup of all $f \in \aut(M)$ that pointwise stabilize $A$. The \emph{algebraic closure} of $A$, usually denoted by $\acl_M(A)$ is the set of all elements in $M$ whose orbits are finite under the evaluation action of $\aut_A(M)$ (see \cite{Ho} for more information on this notion). We say that $A$ is \emph{algebraically closed} if $\acl_M(A)=A$. It is easy to see that $\acl_M(\acl_M(A))=\acl_M(A)$, and that for every algebraically closed $A \subseteq M$, and $a \in M \setminus A$, the orbit of $a$ under the action of $\aut_A(M)$ is infinite.

We say that $M$ has \emph{no algebraicity} if every finite $A \subseteq M$ is algebraically closed.
Note that $M$ has no algebraicity iff for every finite $A \subseteq M$, and every finite set $B\subseteq M \setminus A$, 
the orbit of $B$ under the action of  $\Aut_A(M)$  is infinite.

It is easy to see that a non-archimedean Polish group $G$ is precompact iff every orbit under the evaluation action of $G$ is finite.

\begin{lemma}
\label{le:NonConv}
Let $M$ be a structure such that algebraic closures of finite subsets of $M$ are finite, and let $G=\Aut(M)$. Suppose that, for some $n$ and open $U \subseteq G^n$, there are comeagerly many $n$-tuples in $U$ generating a non-precompact subgroup of $G$. Then every $n$-tuple whose class of topological similarity is non-meager in $U$ generates a discrete group.
\end{lemma}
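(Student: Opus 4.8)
## Proof Plan for Lemma \ref{le:NonConv}

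The plan is to argue by contraposition: I will take an $n$-tuple $\bar f$ whose topological similarity class is non-meager in $U$, assume that $\gen{\bar f}$ is \emph{not} discrete, and derive that comeagerly many tuples in $U$ generate a \emph{precompact} group — contradicting the hypothesis. The basic strategy is to exploit non-discreteness of $\gen{\bar f}$ via Lemma \ref{le:NonDisc} to produce many reduced words $w$ with $w(\bar f)$ close to $e$; since similarity is a topological invariant, these relations transfer to comeagerly many tuples $\bar g$ in $U$, and this abundance of ``small'' group words should force the orbits of $\gen{\bar g}$ to be finite, i.e. $\gen{\bar g}$ precompact (using the characterization of precompactness by finiteness of orbits, valid for non-archimedean groups, together with finiteness of algebraic closures).

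First I would set up the combinatorial core. Since $\gen{\bar f}$ is non-discrete, fix reduced words $v'_k$ with $v'_k(\bar f)\to e$. For an arbitrary target tuple of partial automorphisms $\bar p$ refining the open set $U$ and an arbitrary finite $A\subseteq M$, I want to show that the set of $\bar g\in[\bar p]$ for which the orbit of $A$ under $\gen{\bar g}$ is contained in $\acl_M(A)$ (hence finite) is comeager in $[\bar p]$. The bridge is: if $\bar g$ is topologically similar to $\bar f$, then the same sequences of words that converge to $e$ in $\gen{\bar f}$ converge to $e$ in $\gen{\bar g}$. So for a fixed $a\in A$ and a fixed $b\notin\acl_M(A)$, one uses a genericity/Baire-category argument on $[\bar p]$: the set of $\bar g$ that both extend $\bar p$ and satisfy $w(\bar g)(a)=b$ for \emph{some} word $w$ — combined with forcing $w(\bar g)$ to be arbitrarily close to $e$ via Lemma \ref{le:NonDisc} (prepending a suitable $v_k$) — leads to a contradiction, because $w(\bar g)\to e$ means $w(\bar g)(a)\to a$, but $b\neq a$ and $b$ is in an infinite orbit, so we can make $w(\bar g)$ fix $a$ while $b$ stays outside $\acl_M(A)$. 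The upshot is that the ``bad'' set (where the orbit of $A$ escapes $\acl_M(A)$) is meager in $[\bar p]$, hence the good set is comeager; intersecting over all $\bar p$ refining $U$ and all finite $A$ (a countable collection, since $M$ is countable) gives a comeager subset of $U$ consisting of tuples with all orbits finite.

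The main obstacle I anticipate is making the transfer-of-relations argument interact correctly with Baire category on $[\bar p]$ — the topological similarity class of $\bar f$ need not itself be comeager (only non-meager in $U$), so I cannot simply say ``comeagerly many $\bar g$ are similar to $\bar f$.'' The fix is that I only need \emph{non-meagerness}: the non-meager similarity class meets every non-empty open $[\bar p]\subseteq U$ in a non-meager set, so for each such $[\bar p]$ I can find witnesses $\bar g$ similar to $\bar f$ densely enough to run a category argument showing the orbit-escaping set is nowhere dense in $[\bar p]$ — here I would use Lemma \ref{le:NonDisc} to guarantee that for any prescribed initial word $u$ (coding a partial-automorphism condition that moves $a$ to $b$) there is an extension $w=vu$ with $w(\bar f)\to e$, hence $w(\bar g)\to e$ for the similar $\bar g$, and this incompatibility with $b\notin\acl(A)$ rules out an open set of such $\bar g$. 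Carefully bookkeeping the reduced-word manipulations (the case analysis in Lemma \ref{le:NonDisc} on how $v'_k$ starts and ends) and the exact quantifier order in the category argument is the delicate part; the rest is routine once the characterizations ``precompact $\iff$ all orbits finite'' and ``$\acl$ finite'' are invoked.
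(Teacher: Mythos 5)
Your proposal has the right raw ingredients (Lemma \ref{le:NonDisc}, transfer of word-convergence along a topological similarity, a Baire category argument on basic open sets $[\bar{p}]$, and the role of finite algebraic closures), but the logical architecture has two genuine flaws. First, your ``fix'' rests on the claim that a non-meager similarity class meets \emph{every} non-empty open $[\bar{p}]\subseteq U$ in a non-meager set; this is false. A non-meager set with the Baire property is comeager in \emph{some} non-empty open set but may be meager or even empty in others. The correct move is either to localize to an open $V\subseteq U$ in which the class $C$ is comeager, or (as the paper does) to fix in advance a single sequence $\{w_k\}$ of reduced words in which every reduced word occurs as a terminal segment infinitely often --- this is exactly why Lemma \ref{le:NonDisc} is stated for an arbitrary prescribed sequence $\{u_k\}$ --- and to prove that the divergence set $P=\{\bar{g}\in U\colon w_k(\bar{g})\not\rightarrow e\}$ is comeager in $U$ by exhibiting, inside every open $V\subseteq U$, some $[\bar{p}]\subseteq V$ on which $P$ is comeager. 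Your sketch instead chooses the word $u$ (and hence $w=vu$) depending on the condition, without fixing a single sequence against which the category argument and the similarity transfer can both be run.

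Second, and more seriously, your intermediate target is aimed in the wrong direction: you try to show that the ``orbit-escaping'' set is nowhere dense, i.e.\ that generic tuples in $[\bar{p}]$ have the orbit of $A$ inside $\acl_M(A)$ and hence are precompact. Nothing in the assumptions can deliver this: the similarity transfer only constrains tuples in $C$, and being similar to a non-discrete tuple is perfectly compatible with having infinite orbits, so the claimed ``incompatibility with $b\notin\acl_M(A)$'' is not a contradiction. The working argument runs the other way: one uses the hypothesis to find $[\bar{p}]$ and a point $a$ whose orbit is infinite for comeagerly many tuples in $[\bar{p}]$ (so every condition $\bar{q}$ extending $\bar{p}$ admits $u$ with $u(\bar{q})(a)$ undefined), and then shows that the sets $P_k=\{\bar{g}\in[\bar{p}]\colon \exists l\geq k\ w_l(\bar{g})(a)\neq a\}$ are open and dense; the density step is the real work, an induction extending $\bar{q}$ (with algebraically closed domains, which is where finiteness of algebraic closures enters) so that $w_l(\bar{r})(a)$ is defined and different from $a$. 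This makes $w_k(\bar{g})\not\rightarrow e$ generic, whereas every $\bar{g}\in C$ satisfies $w_k(\bar{g})\rightarrow e$, so $C$ is meager in $U$ --- the desired contradiction. Your outline omits this extension/induction step entirely, and it cannot be bypassed.
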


\begin{proof}
We will show that for every sequence $\{w_k\}$ of reduced words in $F_n$ such that for every reduced word $u$ there exist infinitely many $k$ with $w_k=v_ku$, the set
\[ P= \{ \bar{f} \in U \colon w_k(\bar{f}) \not \rightarrow e \} \]
is comeager in $U$. To see that this implies the lemma, suppose that an $n$-tuple $\bar{f}$ generates a non-discrete group. By Lemma 4.1, there exists a sequence $\{w_k\}$ as above, and such that $w_k(\bar{g}) \rightarrow e$ for every $\bar{g}$ in the similarity class $C$ of $\bar{f}$. But then $C$ must be meager in $U$. 

Fix a sequence $\{w_k\}$ as above. As topological similarity classes have the Baire property, it suffices to show that for every open, non-empty $V \subseteq U$ there exists open, non-empty $V_0 \subseteq V$ such that $w_k(\bar{g}) \not \rightarrow e$ for comeagerly many $\bar{g} \in V_0$.

Fix an open, non-empty $V \subseteq U$. Because $\gen{\bar{f}}$ is non-precompact for comeagerly many $n$-tuples $\bar{f}$ in $V$, there exists $a \in M$ such that the orbit $\gen{\bar{f}}. a$ under the evaluation action of $\gen{\bar{f}}$ on $M$ is infinite for non-meagerly many $n$-tuples $\bar{f}$ in $V$. Thus, there exists an $n$-tuple $\bar{p}$ of partial automorphisms of $M$ such that $[\bar{p}] \subseteq V$, and the orbit of $a$ is infinite for comeagerly many tuples $\bar{f}$ in $[\bar{p}]$. In particular, for every $n$-tuple $\bar{q}$ of partial automorphisms extending $\bar{p}$, there exists $u \in F_n$ such that $u(\bar{q})(a)$ is not defined.

Fix $k \in \NN$, and let 
\[ P_k=\{ \bar{g} \in [\bar{p}]\colon\, \exists l \geq k \ w_l(\bar{g})(a) \neq a \}. \]
We show that $P_k$ is dense in $[\bar{p}]$. Fix an $n$-tuple of partial automorphisms $\bar{q}$ extending $\bar{p}$. 
Our definition of the partial automorphism immediately implies that any  (surjective) partial automorphism $r\colon A\to B$ extends uniquely to
a (surjective) partial automorphism $r\colon {\rm acl}(A)\to {\rm acl}(B)$. 
We can therefore assume that the domain, and thus the range, of every partial automorphism in $\bar{q}$ is algebraically closed. Fix $u \in F_n$ such that $u(\bar{q})(a)$ is not defined. 

Find $l \geq k$ with $w_l=vu$, and write $w_l(\bar{q})=z_1 \ldots z_m$. 
 Let $i_0 \leq m$ be the largest number such that $z_{i_0} \ldots z_m(a)$ is not defined. 

We show by induction on $i_0=1,\ldots,m$ that if $\bar{q}$ and $z_1,\ldots,z_m$ are such that $w_l(\bar{q})=z_1 \ldots z_m$ and
$i_0$ is the largest number such that $z_{i_0} \ldots z_m(a)$ is not defined then there is 
an $n$-tuple $\bar{r}$ of partial automorphisms extending $\bar{q}$ such that $w_l(\bar{r})(a)$ is defined and not equal to $a$.

If $i_0=1$, we use the fact that $\dom(z_{i_0})$ is algebraically closed and $a \not  \in \dom(z_{i_0})$ to extend  $z_{i_0}$ to $z'_{i_0}$, by setting $z'_{i_0}(b)=c$, where $b=z_{i_0+1}\ldots z_m(a)$ and $c \neq a$, and extending the domain to the algebraic closure.
Extend all other $z_i$ of the form $z_{i_0}$ or $z_{i_0}^{-1}$ accordingly. This gives the required $\bar{r}$.

If $i_0>1$, also using algebraic closedness of $\dom(z_{i_0})$, we extend $z_{i_0}$ to $z'_{i_0}$ by setting $z'_{i_0}(b)=c$, where $b=z_{i_0+1}\ldots z_m(a)$, $c \neq b$, and $c$ is such that $z_{i_0-1}(c)$ is not defined, and extending the domain to the algebraic closure. Extend all other $z_i$ of the form
 $z_{i_0}$ or $z_{i_0}^{-1}$ accordingly, and set $z'_i=z_i$ for the remaining $i$'s.
 Note that, since $c\neq b$, even if we had $z_{i_0-1}=z_{i_0}$, then $z'_{i_0-1}(c)$ is still not defined.
 Therefore $i_0-1$ is the largest number such that $z'_{i_0-1}z'_{i_0} \ldots z'_m(a)$ is not defined.  
We apply the inductive assumption to $z'_1,\ldots,z'_m$ and get the required $\bar{r}$, which finishes this step of the induction.

Clearly, each $P_k$ is open, so $P \supseteq \bigcap_k P_k$ is comeager in $[\bar{p}]$. Thus, $V_0=[\bar{p}]$ is as required.


\end{proof}


The above lemma yields a general condition implying meagerness of topological similarity classes.

\begin{theorem}
\label{th:MeagSimClass}
Let $M$ be a structure such that algebraic closures of finite subsets of $M$ are finite, and let $G=\Aut(M)$. Suppose that, for some $n$, there are comeagerly many $n$-tuples in $G$ generating a non-precompact and non-discrete subgroup. Then each $n$-dimensional class of topological similarity in $G$ is meager.
\end{theorem}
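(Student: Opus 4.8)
The plan is to derive Theorem \ref{th:MeagSimClass} directly from Lemma \ref{le:NonConv}. The hypothesis gives us, for a fixed $n$, a comeager set of $n$-tuples in $G=\Aut(M)$ generating a group that is simultaneously non-precompact and non-discrete. In particular, with $U=G^n$, the hypothesis of Lemma \ref{le:NonConv} is satisfied (comeagerly many $n$-tuples generate a non-precompact subgroup), so the conclusion of that lemma applies: every $n$-tuple whose topological similarity class is non-meager in $G^n$ generates a discrete group.

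Now suppose toward a contradiction that some $n$-dimensional topological similarity class $C$ in $G$ is non-meager. Since topological similarity classes have the Baire property (as noted at the start of Section 4, being coanalytic), and since $C$ is non-meager in the Polish group $G^n$, Lemma \ref{le:NonConv} (with $U=G^n$) tells us that every tuple $\bar{f} \in C$ generates a discrete subgroup $\gen{\bar{f}}$ of $G$. On the other hand, the set of $n$-tuples generating a non-discrete subgroup is comeager in $G^n$ by hypothesis; call this comeager set $D$. Then $C$, being non-meager, must meet $D$ — indeed, a non-meager set with the Baire property cannot be disjoint from a comeager set, since $C \cap D$ would then equal $C$ minus a meager set, hence be non-meager and in particular non-empty. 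So there is a tuple $\bar{f} \in C \cap D$, which both generates a discrete group (since $\bar{f}\in C$ and we just showed every tuple in $C$ does) and generates a non-discrete group (since $\bar{f} \in D$). This is the desired contradiction, so every $n$-dimensional topological similarity class in $G$ is meager.

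The only point requiring a little care — and the step I expect to be the main (minor) obstacle — is making sure the Baire-category bookkeeping is airtight: one must invoke that $C$ has the Baire property to conclude $C \cap D \neq \emptyset$ from $C$ non-meager and $D$ comeager, and one must check that Lemma \ref{le:NonConv}'s conclusion is genuinely applicable to an arbitrary non-meager similarity class (it is, since the lemma quantifies over all $n$-tuples whose similarity class is non-meager in $U$, and here $U = G^n$). No new combinatorial or structural work on $M$ is needed beyond what is already packaged inside Lemma \ref{le:NonConv}; the finiteness-of-algebraic-closures assumption is used there, not again here.
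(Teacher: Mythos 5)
Your proof is correct and follows exactly the route the paper intends: Theorem \ref{th:MeagSimClass} is stated as an immediate consequence of Lemma \ref{le:NonConv} applied with $U=G^n$, combined with the observation that a non-meager similarity class must meet the comeager set of tuples generating a non-discrete group. (The appeal to the Baire property in the intersection step is superfluous --- any non-meager set meets any comeager set --- but this is harmless.)
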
 

As a matter of fact, we  obtain the following trichotomy.

\begin{theorem}
\label{th:Trichotomy}
Let $M$ be a structure such that algebraic closures of finite subsets of $M$ are finite, and let $G=\Aut(M)$. Then for every $n$ and $n$-tuple $\bar{f}$ in $G$:
\begin{itemize}
\item $\langle \bar{f} \rangle$ is precompact, or
\item $\langle \bar{f} \rangle$ is discrete, or
\item the similarity class of $\bar{f}$ is meager.
\end{itemize}
\end{theorem}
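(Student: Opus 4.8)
The plan is to deduce Theorem~\ref{th:Trichotomy} from Theorem~\ref{th:MeagSimClass} together with Lemma~\ref{le:NonConv} and a symmetric analogue of Lemma~\ref{le:NonConv} dealing with discreteness. The key observation is that both ``being precompact'' and ``being discrete'' are properties that are shared by all members of a single topological similarity class: if $\bar f$ and $\bar g$ are topologically similar via an isomorphism $\phi\colon\gen{\bar f}\to\gen{\bar g}$, then $\gen{\bar f}$ is discrete iff $\{e\}$ is open in $\gen{\bar f}$ iff (by bicontinuity of $\phi$) $\{e\}$ is open in $\gen{\bar g}$, and similarly $\gen{\bar f}$ is precompact iff its completion is compact, which is again a bicontinuous-isomorphism invariant. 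So the three classes of tuples
\[ \mathcal{P}=\{\bar f\colon \gen{\bar f}\text{ precompact}\},\quad \mathcal{D}=\{\bar f\colon \gen{\bar f}\text{ discrete}\},\quad \mathcal{R}=G^n\setminus(\mathcal{P}\cup\mathcal{D}) \]
are each unions of similarity classes. Hence it suffices to show that every similarity class contained in $\mathcal{R}$ is meager.

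Fix a similarity class $C\subseteq\mathcal{R}$; we want $C$ meager in $G^n$. The strategy is to partition $G^n$ by the local behaviour and handle each piece. For each pair $(U,W)$ of basic open sets, the set of $\bar f$ that generate a precompact group is coanalytic (it is characterized by: for every $a\in M$ the orbit $\gen{\bar f}.a$ is finite, an analytic-over-Borel condition), so it has the Baire property; the same holds for the discreteness set (``$\exists$ a neighbourhood $V$ of $e$ in $G$ with $\gen{\bar f}\cap V=\{e\}$''). Now suppose toward a contradiction that $C$ is non-meager; then it is non-meager in some basic open $[\bar p]$, and comeager in some smaller basic open set $V$. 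Inside $V$ the set $C$ is non-meager, so in particular $\mathcal{R}$ is non-meager in $V$; since $\mathcal{P}$ and $\mathcal{D}$ have the Baire property, we may shrink $V$ further so that comeagerly many tuples in $V$ lie in $\mathcal{R}$, i.e.\ comeagerly many generate a group that is neither precompact nor discrete. Applying Theorem~\ref{th:MeagSimClass} with the open set $V$ in place of $G$ (the proof of that theorem, via Lemma~\ref{le:NonConv} and Lemma~\ref{le:NonDisc}, is entirely local and goes through verbatim with $G^n$ replaced by an arbitrary non-empty open $V\subseteq G^n$) we conclude that every $n$-dimensional similarity class is meager in $V$; in particular $C$ is meager in $V$, contradicting that $C$ is comeager in $V$.

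Alternatively, and perhaps more cleanly, one can argue directly from the two lemmas. Given the similarity class $C$ of a tuple $\bar f$, if $\gen{\bar f}$ is neither precompact nor discrete, I want $C$ meager. Suppose not; then $C$ is non-meager in some basic open $U$. Since discreteness and precompactness are similarity invariants, no tuple in $C$ generates a precompact or discrete group, so $C$ is disjoint from $\mathcal{P}\cup\mathcal{D}$; being non-meager in $U$ and having the Baire property, $\mathcal{R}$ is non-meager — hence, after shrinking $U$, comeager — in $U$. Either comeagerly many tuples in $U$ generate a non-precompact group, or not: if they do not, then by the Baire property comeagerly many generate a precompact group on some smaller open set, contradicting comeagerness of $\mathcal{R}$; so on a further shrinking we have comeagerly many non-precompact tuples, and by the same token comeagerly many non-discrete ones, hence comeagerly many generating a group that is non-precompact and non-discrete. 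Then Lemma~\ref{le:NonConv} (in the form used inside the proof of Theorem~\ref{th:MeagSimClass}) shows that any similarity class that is non-meager in such an open set must consist of tuples generating a discrete group — contradicting $C\cap\mathcal{D}=\emptyset$ and $C$ non-meager in $U$.

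The main obstacle I anticipate is the bookkeeping needed to justify that Theorem~\ref{th:MeagSimClass} and Lemma~\ref{le:NonConv} apply \emph{relativized to an arbitrary non-empty open subset} of $G^n$ rather than to all of $G^n$ — one must check that their proofs never used more than the Baire-category structure on an open piece, which is true since Lemma~\ref{le:NonConv} already quantifies over open non-empty $V\subseteq U$ and produces a basic open $[\bar p]$ inside it. A secondary, more routine point is verifying carefully that precompactness and discreteness of $\gen{\bar f}$ are genuinely invariants of the bicontinuous isomorphism type of the topological group $\gen{\bar f}$ (for discreteness this is immediate; for precompactness one uses that a bicontinuous isomorphism of topological groups extends to an isomorphism of completions, and compactness of the completion transfers), and that the relevant sets $\mathcal{P}$, $\mathcal{D}$ have the Baire property so that ``non-meager'' can be upgraded to ``comeager on a smaller open set.''
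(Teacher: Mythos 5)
Your proposal is correct and takes essentially the same route as the paper: everything comes down to Lemma~\ref{le:NonConv} applied to an open set on which the (coanalytic, hence Baire-property) similarity class is comeager, together with the observation that precompactness and discreteness of $\gen{\bar f}$ are invariants of topological similarity --- the paper uses the precompactness invariance implicitly in its last step, while simply noting that either the lemma's hypothesis holds on $U$ (so $\gen{\bar f}$ is discrete) or precompact-generating tuples are non-meager in $U$ and therefore meet the comeager class. One small caution: in your second formulation you shrink the open set after only knowing $C$ is non-meager there, and non-meagerness does not pass to smaller open sets; your first formulation avoids this by fixing $V$ with $C$ comeager in $V$ (which does pass to open subsets), so no real repair is needed.
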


\begin{proof}
Fix an $n$-tuple $\bar{f}$ in $G$, and suppose that its similarity class $C$ is non-meager. Then it is comeager in some non-empty open $U \subseteq G^n$. By the last statement of Lemma \ref{le:NonConv}, either $\bar{f}$ generates a discrete group or the assumptions of the lemma are not satisfied, i.e., there are non-meagerly many tuples in $U$ that generate a precompact group. But then the set of such tuples must intersect $C$, which implies that $\bar{f}$ also generates a precompact group.
\end{proof}

\begin{remark}
\label{re:HruCompact}
{ \rm We would like to point out that in the above results, even though we allow $n=1$, in fact, only values larger than $1$ provide any new insight. This is because in all non-archimedean Polish groups, subgroups generated by elements must be either discrete or precompact (see Lemma 5 in \cite{Ma1}).}
\end{remark}

Recall that an ultrahomogeneous structure $M$ has the \emph{Hrushovski property} if for every $n \in \NN$, $A \in \age(M)$, and tuple $(p_1, \ldots, p_n)$ of partial automorphisms of $A$, there exists $B \in \age(M)$, such that $A \subseteq B$, and every $p_i$ can be extended to an automorphism of $B$. Similarly, $M$ has the $n$-Hrushovski property if the above holds for the given $n$. Clearly,  $M$ has the Hrushovski property if and only if it has the $n$-Hrushovski property for all $n \in \NN$.

It is well known that the existence of ample generics in the automorphism group of an ultrahomogeneous structure $M$ is implied by the Hrushovski property, provided that sufficiently free amalgamation is present in $\age(M)$ (in case of actual free amalgamation, it is implicit in the results from \cite{HoHo}). This is true for the random graph (because finite graphs can be freely amalgamated)
 or the rational Urysohn space (because finite metric spaces can be metrically-free amalgamated). On the other hand, there exist groups with ample generics that are of a different type. For example, the automorphism group of the countable atomless Boolean algebra (equivalently, the homeomorphism group of the Cantor space) has ample generics and a generic automorphism in this group generates a discrete subgroup, therefore the countable atomless Boolean algebra does not even have the $1$-Hrushovski property. 

The following lemma can be extracted from the proof of Proposition 6.4 in \cite{KR}, where it is shown that the Hrushovski property is equivalent to the existence of a countable chain of compact subgroups whose union is $\Aut(M)$.
\begin{lemma}
\label{le:HruPreComp}
Let $M$ be an ultragomogeneous structure. Then, for every $n$, $M$ has the $n$-Hrushovski property if and only if densely many $n$-tuples in $\Aut(M)$ generate a precompact subgroup of $\Aut(M)$ if and only if comeagerly many $n$-tuples in $\Aut(M)$ generate a precompact subgroup of $\Aut(M)$.
\end{lemma}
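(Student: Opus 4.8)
The plan is to prove \textbf{Lemma \ref{le:HruPreComp}} by establishing a cycle of implications, using the fact (recalled just before the lemma) that the $n$-Hrushovski property is connected to covering $\Aut(M)$ by compact subgroups, together with a Baire category argument to upgrade ``densely many'' to ``comeagerly many''. Throughout, fix $n$ and write $G=\Aut(M)$; recall from the excerpt that a non-archimedean group is precompact iff all its evaluation orbits are finite, so an $n$-tuple $\bar f$ generates a precompact subgroup of $G$ exactly when $\gen{\bar f}.a$ is finite for every $a\in M$.

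First I would show the $n$-Hrushovski property implies \emph{densely many} $n$-tuples generate a precompact subgroup. Given a basic open set $[\bar p]\subseteq G^n$, where $\bar p=(p_1,\dots,p_n)$ is an $n$-tuple of partial automorphisms, let $A\in\age(M)$ be a finite substructure of $M$ containing the domains and ranges of all $p_i$ (and such that each $p_i$ is a partial automorphism of $A$, after passing to the substructure generated by these finite sets, which is finite since $M$ is locally finite). By the $n$-Hrushovski property, there is $B\in\age(M)$ with $A\subseteq B$ and automorphisms $\tilde p_i\in\Aut(B)$ extending $p_i$. Using ultrahomogeneity of $M$, realize $B$ as a substructure of $M$ and extend each $\tilde p_i$ to an automorphism $f_i\in G$; then $\bar f=(f_1,\dots,f_n)\in[\bar p]$. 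The subgroup $\gen{\bar f}$ setwise stabilizes the copy of $B$ inside $M$ — more precisely, every element of $M$ lying in that copy of $B$ has $\gen{\bar f}$-orbit contained in $B$, hence finite; but this only bounds orbits of points of $B$. To handle all of $M$, I would instead do the standard refinement: iterate the Hrushovski property to build an increasing chain $B\subseteq B_1\subseteq B_2\subseteq\cdots$ exhausting $M$ with each $p_i$ extending to automorphisms of every $B_j$ — this is possible because the $n$-Hrushovski property lets us extend the partial automorphisms $\tilde p_i\restriction B_j$ (restricting the already-constructed $f_i$) to automorphisms of a larger $B_{j+1}$ — and take a diagonal/compatible limit to obtain $\bar f\in[\bar p]$ with $\gen{\bar f}.a$ finite for every $a\in M$. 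Then $\gen{\bar f}$ is precompact. (This chain construction is essentially the content of Proposition 6.4 in \cite{KR}, which the excerpt cites.)

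Second, I would show that \emph{densely many} implies \emph{comeagerly many}. For each $a\in M$ and $k\in\NN$, consider the set $D_{a,k}=\{\bar f\in G^n\colon |\gen{\bar f}.a|> k\text{ or }|\gen{\bar f}.a|\text{ is finite}\}$; more usefully, note that the set $F_a=\{\bar f\colon \gen{\bar f}.a\text{ is finite}\}$ is $G_\delta$: indeed $\gen{\bar f}.a$ is finite iff there is a finite $S\subseteq M$ with $a\in S$ and $w(\bar f)(s)\in S$ for all $s\in S$ and all generators $w\in\{s_1^{\pm1},\dots,s_n^{\pm1}\}$, and for fixed finite $S$ this last condition is clopen in $\bar f$; so $F_a=\bigcup_{S}\{\bar f\colon \bar f\text{ stabilizes }S\text{ setwise pointwise-as-above}\}$, a countable union of clopen sets, hence... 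I should be careful: that makes $F_a$ an $F_\sigma$ of clopen sets, i.e. open? No — ``$\bar f$ stabilizes $S$'' for a fixed finite $S$ is closed (an intersection over $s\in S$ and generators of the clopen conditions), and being closed-with-nonempty-interior-free is what I need. The clean route: the set $\{\bar f\colon \gen{\bar f}\text{ is precompact}\}=\bigcap_{a\in M}F_a$, and since being precompact passes to... Actually the standard fact is that $\{\bar f\colon \gen{\bar f}\text{ precompact}\}$ is a $G_\delta$ set when $G$ is a non-archimedean Polish group with a countable dense set of open subgroups (it equals a countable intersection of open sets — ``$\bar f$ meets only finitely many cosets of $V$'' is open-ish). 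I would verify that $F_a$ is $G_\delta$ and hence $\bigcap_a F_a$ is $G_\delta$; a dense $G_\delta$ set is comeager, so density from Step one gives comeagerness. Finally, comeagerly many trivially implies densely many, and densely many implies the $n$-Hrushovski property by a converse argument: given $A\in\age(M)$ and partial automorphisms $(p_1,\dots,p_n)$ of $A$, realize $A\subseteq M$, pick $\bar f\in[\bar p]$ (dense, so nonempty near $\bar p$) generating a precompact group; then $\bigcup_i \gen{\bar f}.A$ is a finite subset of $M$, and the substructure $B$ it generates is finite (local finiteness), contains $A$, and is invariant under each $f_i$, so each $f_i\restriction B$ is an automorphism of $B$ extending $p_i$.

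The main obstacle I expect is the careful bookkeeping in Step one: extending a tuple of partial automorphisms simultaneously to automorphisms of a chain of finite structures exhausting $M$, while keeping everything compatible so that the limit lies in the prescribed basic open set $[\bar p]$ and genuinely has all orbits finite. The subtlety is that naively applying the $n$-Hrushovski property at each stage gives extensions to larger and larger finite structures, but one must ensure the \emph{same} automorphisms of $M$ work at every level — this requires a back-and-forth-style diagonalization (alternately extending the domain structure and extending the maps), which is exactly why the excerpt points to the proof of Proposition 6.4 in \cite{KR} rather than reproving it. The category-theoretic upgrade in Step two and the easy converse in Step three are routine by comparison, the only care needed being the verification that $\{\bar f\colon\gen{\bar f}\text{ precompact}\}$ is Borel of the right complexity (a countable intersection of sets that are themselves $G_\delta$, using that $M$ is countable).
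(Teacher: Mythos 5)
Your proposal is correct and follows essentially the same route as the paper: the chain construction (partial automorphisms whose domains and ranges equal the terms of an exhausting chain of finite substructures, obtained from the $n$-Hrushovski property plus ultrahomogeneity) for the Hrushovski-to-dense direction, the $G_\delta$-ness of the set of precompact-generating tuples for the dense-to-comeager upgrade, and the finite invariant substructure argument for the converse. Your one hesitation is unnecessary: for a fixed finite $S$ the condition that each $f_i$ maps $S$ onto $S$ is clopen (a finite intersection of clopen conditions on the values of the $f_i$ on $S$), so each $F_a$ is in fact open and $\bigcap_{a\in M}F_a$ is $G_\delta$ --- precisely the fact the paper asserts without proof.
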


\begin{proof}
Fix $n \in \NN$.
The set of $n$-tuples in $\Aut(M)$ that generate a precompact subgroup of $\Aut(M)$ is a $G_\delta$,  which implies that the second and the third statements are equivalent.

Suppose that $M$ has the $n$-Hrushovski property, and fix an $n$-tuple $\bar{p}$ of partial automorphisms of $M$. Then $\bar{p}$ can be extended to an $n$-tuple $\bar{f}$ of automorphisms of $M$ such that all orbits under the evaluation action of $\gen{\bar{f}}$ are finite, i.e., $\gen{\bar{f}}$ is precompact. 
 Indeed, using the Hrushovski property and the ultrahomogeneity of $M$ we can find a sequence $A_1\subseteq A_2\subseteq \ldots $
  of finite substructures of $M$, whose union is $M$, and partial automorphisms  $\bar{p}\subseteq \bar{p}_1\subseteq \bar{p}_2\subseteq\ldots $
  such that the domain and the range of every function in $\bar{p}_i$ is equal to $A_i$. Then $\bar{f}=\bigcup_i \bar{p}_i$ is as required.
 This shows that  densely many $n$-tuples in $\Aut(M)$ generate a precompact subgroup of $\Aut(M)$.

On the other hand, if $M$ does not have the $n$-Hrushovski property, there exists an $n$-tuple $\bar{p}$ of partial automorphisms that cannot be extended to such $\bar{f}$, which means that every $n$-tuple $\bar{f}$ of automorphisms of $M$ extending $\bar{p}$ is non-precompact.
\end{proof}

Theorem \ref{47} implies that if the automorphism group $\Aut(M)$ of an ultrahomogeneous structure $M$ such that algebraic closures of finite sets are finite has ample generics then either $M$ has the Hrushovski property or otherwise the 1-Hrushovski property fails.

\begin{theorem}\label{47}
\label{th:Hrushovski}
Let $M$ be an ultrahomogeneous structure such that algebraic closures of finite subsets of $M$ are finite. Then one of following holds:
\begin{enumerate}
\item $M$ has the Hrushovski property,
\item $M$ does not have the $1$-Hrushovski property,
\item there exists $n$ such that none of $n$-dimensional topological similarity classes in $\Aut(M)$ is comeager. In particular, $\Aut(M)$ does not have ample generics.
\end{enumerate}
\end{theorem}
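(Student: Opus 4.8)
The plan is to prove Theorem \ref{47} by a simple case analysis driven by Theorem \ref{th:Trichotomy} and Lemma \ref{le:HruPreComp}. Assume that neither (1) nor (2) holds; we must produce $n$ as in (3). Since (1) fails, there is some $n$ for which $M$ lacks the $n$-Hrushovski property. By Lemma \ref{le:HruPreComp}, this means the set of $n$-tuples generating a precompact subgroup of $\Aut(M)$ is \emph{not} comeager (equivalently, not dense). I claim this $n$ works. Suppose towards a contradiction that some $n$-dimensional topological similarity class $C$ in $G = \Aut(M)$ is comeager. In particular $C$ is non-meager, so by Theorem \ref{th:Trichotomy} applied to any $\bar f \in C$, either $\gen{\bar f}$ is precompact, or $\gen{\bar f}$ is discrete, or the similarity class of $\bar f$ (which is exactly $C$) is meager. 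The last option is excluded, so every tuple in $C$ generates either a precompact or a discrete subgroup; moreover, all tuples in $C$ generate topologically isomorphic groups, so either every tuple in $C$ generates a precompact group, or every tuple in $C$ generates a discrete group.

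In the first case, $C$ being comeager would give comeagerly many $n$-tuples generating a precompact subgroup, contradicting the failure of the $n$-Hrushovski property via Lemma \ref{le:HruPreComp}. So we are in the second case: comeagerly many $n$-tuples in $G$ generate a discrete subgroup. Here I would argue that discreteness of $\gen{\bar f}$ for comeagerly many $n$-tuples $\bar f$ forces discreteness already at the level of elements, and hence the failure of the $1$-Hrushovski property, contradicting the assumption that (2) fails. Concretely: if comeagerly many $n$-tuples $(f_1, \ldots, f_n)$ generate a discrete group, then by Kuratowski--Ulam (the set in question is a tail set in the product in the appropriate sense, or one projects to a single coordinate) comeagerly many $f_1 \in G$ have $\gen{f_1}$ discrete. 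By Remark \ref{re:HruCompact} (Lemma 5 in \cite{Ma1}), a cyclic subgroup of a non-archimedean Polish group is either discrete or precompact, and the set of $f$ with $\gen{f}$ precompact and the set of $f$ with $\gen{f}$ discrete are disjoint; so if the discrete ones are comeager, the precompact ones are meager, in particular not dense, and Lemma \ref{le:HruPreComp} (with $n=1$) gives that $M$ fails the $1$-Hrushovski property, i.e.\ (2) holds, a contradiction. The final sentence of (3), that $\Aut(M)$ lacks ample generics, is then immediate: ample generics would give a comeager $n$-diagonal conjugacy class for this $n$, which is in particular a comeager topological $n$-similarity class (since diagonally conjugate tuples are topologically similar), contradicting what we have just shown.

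The main obstacle is the passage from ``comeagerly many $n$-tuples generate a discrete group'' to ``comeagerly many single elements generate a discrete group.'' One has to be slightly careful that the projection/section argument is valid: the cleanest route is to observe that if $\gen{f_1,\dots,f_n}$ is discrete then so is its subgroup $\gen{f_1}$, so the set $D_n$ of $n$-tuples with discrete generated group is contained in $D_1 \times G^{n-1}$ (identifying $G^n$ with $G \times G^{n-1}$ via the first coordinate), where $D_1$ is the set of $f \in G$ with $\gen f$ discrete; if $D_n$ is comeager in $G^n$ then $D_1 \times G^{n-1}$ is comeager, hence by Kuratowski--Ulam $D_1$ is comeager in $G$. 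Alternatively, one can avoid this step entirely by invoking Remark \ref{re:HruCompact} differently, or by noting that it suffices to treat the contrapositive more directly; but the inclusion argument above is the shortest and most transparent, so that is what I would write up.
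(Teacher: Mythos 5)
Your overall strategy (reduce to the trichotomy of Theorem \ref{th:Trichotomy} plus Lemma \ref{le:HruPreComp}, split according to whether the comeager similarity class consists of precompact-generating or discrete-generating tuples) is reasonable, and the precompact case, the projection step $D_n \subseteq D_1 \times G^{n-1}$ with Kuratowski--Ulam, and the ample-generics remark are all fine. But the final step of your discrete case contains a genuine error: the set of $f$ with $\gen{f}$ discrete and the set of $f$ with $\gen{f}$ precompact are \emph{not} disjoint. A subgroup can be both discrete and precompact --- this happens exactly when it is finite (a discrete subgroup of a Hausdorff group is closed, so discrete plus precompact means compact discrete, i.e.\ finite), and finite cyclic subgroups certainly occur: the identity, or any automorphism of finite order. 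Consequently, knowing that comeagerly many elements generate a discrete group does \emph{not} make the precompact set meager, and no contradiction with the $1$-Hrushovski property follows from Lemma \ref{le:HruPreComp}: a priori, comeagerly many automorphisms could generate \emph{finite} groups, in which case both sets are comeager and your argument produces nothing.

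To close this gap you need precisely the ingredient the paper inserts at this point and which your proof never uses: the hypothesis that algebraic closures of finite subsets of $M$ are finite implies that densely many --- hence comeagerly many, since the set $\{f\colon \gen{f} \text{ is infinite}\}$ is a $G_\delta$ --- automorphisms generate an infinite group. With that, in your discrete case you get comeagerly many $f$ whose $\gen{f}$ is simultaneously discrete (your projection step), precompact (from the $1$-Hrushovski property via Lemma \ref{le:HruPreComp}), and infinite, which is impossible, and the contradiction you wanted is restored. Note that this is where the acl-finiteness hypothesis of the theorem actually does work beyond its implicit use inside Theorem \ref{th:Trichotomy}; the paper runs the same idea in the direct (rather than contrapositive) form: generic single automorphisms are precompact and infinite, hence non-discrete, hence generic $n$-tuples are non-discrete, and on an open set witnessing the failure of the $n$-Hrushovski property they are also non-precompact, so Lemma \ref{le:NonConv} kills every similarity class there.
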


\begin{proof}
Suppose that $M$ does not satisfy (1) and (2), i.e., there exists $n$ such that $M$ does not have the $n$-Hrushovski property but it has the $1$-Hrushovski property. Then, by Lemma \ref{le:HruPreComp}, there are comeagerly many automorphisms in $\Aut(M)$ generating a precompact group. Moreover, it is not hard to see that our assumption that algebraic closures of finite subsets of $M$ are finite, implies that there are densely many (and so comeagerly many) automorphisms generating an infinite group. It follows that there are comeagerly many automorphisms in $\Aut(M)$ generating a non-discrete group, and thus that there are also comeagerly many $n$-tuples generating a non-discrete group.

Now, again by Lemma \ref{le:HruPreComp}, there is a non-empty, open $U \subseteq \Aut(M)^n$ such that comeagerly many $n$-tuples in $U$ generate a non-precompact group. Thus, there are comeagerly many $n$-tuples in $U$ generating a non-discrete and non-precompact group. By Lemma \ref{le:NonConv}, every $n$-dimensional topological similarity class in $\Aut(M)$ is meager in $U$, so, in particular, none of $n$-dimensional topological similarity classes in $\Aut(M)$ is comeager.
\end{proof}

For a property $P$ we say that a {\em generic} $n$-tuple has the property $P$ if $\{\bar{f}\in G^n\colon \bar{f} \text{ has the property } P\}$ is comeager in $G^n$.

\begin{corollary}\label{thm1}
Suppose $G = \Aut(M)$ is the automorphism group of an ultrahomogeneous structure $M$ such that algebraic closures of finite subsets of $M$ are finite. Assume also that $G$ has ample generics or just comeager $n$-similarity classes for every $n$. Then either
\begin{enumerate}
\item $M$ has the Hrushovski property or
\item for every $n$ the generic $n$-tuple $\bar{f}$ generates a discrete subgroup of $G$.
\end{enumerate}
\end{corollary}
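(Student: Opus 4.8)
The plan is to derive Corollary~\ref{thm1} directly from Theorem~\ref{th:Hrushovski} (= Theorem~\ref{47}) by a short argument that organizes the three alternatives of that theorem against the standing hypotheses. First I would invoke Theorem~\ref{th:Hrushovski}: since $M$ is ultrahomogeneous and algebraic closures of finite subsets of $M$ are finite, one of the three listed possibilities holds. If (1) of Theorem~\ref{th:Hrushovski} holds, we are in case (1) of the corollary and we are done. If (3) of Theorem~\ref{th:Hrushovski} holds, then there exists $n$ such that no $n$-dimensional topological similarity class in $G$ is comeager; but this directly contradicts our standing assumption that $G$ has comeager $n$-similarity classes for every $n$ (note that ample generics implies this, since a comeager diagonal conjugacy class is contained in a single similarity class, which is therefore comeager). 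So under the hypotheses of the corollary, alternative (3) is excluded, and we are left with alternative (2) of Theorem~\ref{th:Hrushovski}: $M$ does not have the $1$-Hrushovski property.

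So the real content is to show that failure of the $1$-Hrushovski property, together with comeager $n$-similarity classes for all $n$, forces conclusion (2) of the corollary: for every $n$, the generic $n$-tuple generates a discrete subgroup. Fix $n$. Since $M$ fails the $1$-Hrushovski property, by Lemma~\ref{le:HruPreComp} (applied with the parameter $1$) it is \emph{not} the case that comeagerly many single automorphisms generate a precompact subgroup; equivalently, there is a nonempty open set of automorphisms each generating a non-precompact group, and by Remark~\ref{re:HruCompact} (subgroups generated by a single element in a non-archimedean Polish group are discrete or precompact) comeagerly many automorphisms in that open set generate a \emph{discrete} cyclic group. I would then argue that this ``local'' genericity of discreteness propagates: because there is a comeager similarity class $C_n$ in $G^n$, and because an element of $G^n$ generating a non-discrete group cannot be topologically similar to one generating a discrete group, the comeager class $C_n$ must consist of tuples generating discrete groups, \emph{provided} we can exhibit at least one tuple in $C_n$ — equivalently, at least one tuple in the relevant nonempty open sets — that generates a discrete group. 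This is the step to be careful about: I need to leverage the density/comeagerness statements so that the generic $n$-tuple's first coordinate (or some coordinate, or some word) already generates a discrete group.

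Concretely, the cleanest route is: let $D \subseteq G$ be the comeager set of automorphisms generating a discrete (finite-or-infinite cyclic, hence discrete) subgroup — comeagerness of $D$ in $G$ follows from the above since the set of non-precompact-generating automorphisms is nonempty open (by failure of $1$-Hrushovski and Lemma~\ref{le:HruPreComp}) and the set of discrete-generating automorphisms is comeager within any nonempty open set on which precompactness generically fails; one also checks, as in the proof of Theorem~\ref{47}, that densely (hence comeagerly) many automorphisms generate an \emph{infinite} group using finiteness of algebraic closures, but all we need is discreteness. Wait — I should be more careful: $D$ comeager in $G$ needs that precompactness fails generically on \emph{all} of $G$, which need not hold. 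So instead I work inside an open set. By the argument in the proof of Theorem~\ref{47}, there is a nonempty open $U \subseteq G^n$ on which comeagerly many tuples generate a non-precompact and non-discrete group, \emph{unless} in fact the generic $n$-tuple already generates a discrete group. Rather than reprove this, the slick finish is: Theorem~\ref{th:Hrushovski} has already been proved, and its alternative (3) is incompatible with comeager similarity classes; and alternative (1) is case (1) of the corollary; so we are in alternative (2). Then apply Theorem~\ref{th:Hrushovski}'s proof structure \emph{one level down}: for each fixed $n$, either the generic $n$-tuple generates a discrete group (conclusion (2) for that $n$), or it does not, in which case — combined with finiteness of algebraic closures giving genericity of infinite (hence, if non-discrete, witnessed) subgroups and with Lemma~\ref{le:HruPreComp} producing an open $U$ with non-precompact-generating comeager — Lemma~\ref{le:NonConv} makes every $n$-similarity class meager in $U$, contradicting comeagerness of $C_n$. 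Hence for every $n$ the generic $n$-tuple generates a discrete group, which is conclusion (2).

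The main obstacle, as flagged above, is the bookkeeping around the word ``discrete'': I must make sure that ruling out ``non-discrete'' generically for $n$-tuples genuinely follows from ruling it out for the similarity class, and this uses that being non-discrete is invariant under topological similarity (a bi-continuous isomorphism of the generated countable groups carries a sequence tending to $e$ to a sequence tending to $e$), together with the fact that if the generic $n$-tuple were non-discrete-generating, Lemma~\ref{le:NonConv} (whose hypotheses are supplied by Lemma~\ref{le:HruPreComp} applied to the failed $n$-Hrushovski property, and by finiteness of algebraic closures) would make all $n$-similarity classes meager on an open set — contradicting the hypothesis. Everything else is a short deduction from results already in the excerpt; no new estimates are needed.
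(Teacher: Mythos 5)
Your argument is correct and follows essentially the paper's own route: you exclude alternative (3) of Theorem \ref{th:Hrushovski} using the comeager-similarity-class hypothesis (noting ample generics implies it), land in failure of the $1$-Hrushovski property, and then deduce discreteness of the generic $n$-tuple from non-precompactness on an open set supplied by Lemma \ref{le:HruPreComp} together with similarity-invariance of (non-)discreteness. The only difference is cosmetic: where the paper simply invokes Theorem \ref{th:Trichotomy}, you apply Lemma \ref{le:NonConv} directly, which is exactly the engine behind that trichotomy.
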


\begin{proof}
Suppose that $M$ does not have the Hrushovski property. By Theorem \ref{th:Hrushovski}, $M$ does not have the $1$-Hrushovski property, therefore for every $n$, the generic $n$-tuple must generate a non-precompact group. But then, by Theorem \ref{th:Trichotomy}, the generic $n$-tuple generates a discrete group.
\end{proof}

\begin{remark}
{ \rm It was proved by Rosendal (unpublished) that the generic $2$-tuple in the automorphism group of the countable atomless Boolean algebra generates a discrete group. The above corollary gives a strengthening of this result.}
\end{remark}


Corollary \ref{thm1} and Theorem 6.2 from \cite{KR} imply the following corollary.

\begin{corollary}\label{thm2}
Suppose that $M$ is an ultrahomogeneous structure such that algebraic closures of finite subsets of $M$ are finite and $\age(M)$ has the free amalgamation property. Assume that, for some $n$, the generic $n$-tuple generates a non-discrete subgroup of $\Aut(M)$. Then the following are equivalent:
\begin{itemize}
\item $M$ has the Hrushovski property,
\item $\Aut(M)$ has ample generics,
\item there exists a comeager $n$-similarity class in $\Aut(M)$ for every~$n$.
\end{itemize}
\end{corollary}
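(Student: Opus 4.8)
The plan is to derive Corollary~\ref{thm2} by combining the trichotomy and Hrushovski dichotomy already established with the classical implication, due to Kechris and Rosendal (Theorem 6.2 in \cite{KR}), that the Hrushovski property together with free amalgamation in $\age(M)$ yields ample generics. Throughout, fix the $n$ from the hypothesis for which the generic $n$-tuple generates a non-discrete subgroup of $\Aut(M)$; write $G=\Aut(M)$.

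First I would prove the implication from the Hrushovski property to ample generics. Since $\age(M)$ has the free amalgamation property, and since the Hrushovski property holds for $M$, Theorem 6.2 of \cite{KR} applies directly and gives that $G$ has ample generics. Next, the implication from ample generics to the existence of a comeager $n$-similarity class for every $n$ is immediate: a comeager diagonal conjugacy class is in particular contained in a single topological similarity class (diagonally conjugate tuples are topologically similar, as noted in the introduction), and that similarity class, containing a comeager set, is itself comeager. So it remains to close the loop by showing that the existence of a comeager $n$-similarity class for every $n$ implies the Hrushovski property.

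For this last and crucial implication I would argue by contraposition, invoking Corollary~\ref{thm1}. Suppose $M$ does not have the Hrushovski property. Corollary~\ref{thm1} applies since algebraic closures of finite subsets of $M$ are finite and we are assuming (for contradiction) that $G$ has comeager $n$-similarity classes for every $n$; its conclusion is that for \emph{every} $m$, the generic $m$-tuple $\bar{f}$ generates a discrete subgroup of $G$. In particular, taking $m$ equal to the fixed $n$ from the hypothesis, the generic $n$-tuple generates a discrete subgroup of $G$. But this directly contradicts the standing assumption of the corollary that, for this $n$, the generic $n$-tuple generates a non-discrete subgroup. Hence $M$ must have the Hrushovski property, which closes the cycle of equivalences.

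The only subtle point — and the main thing to get right — is making sure the quantifier on $n$ is handled consistently: the hypothesis ``for some $n$, the generic $n$-tuple generates a non-discrete subgroup'' is used only at that single value of $n$, while ``comeager $n$-similarity class for every $n$'' and the conclusion ``$M$ has the Hrushovski property'' are genuinely global statements. The argument via Corollary~\ref{thm1} works precisely because that corollary already extracts a statement about \emph{every} $m$-tuple from comeagerness of similarity classes at all dimensions, so instantiating it at the special $n$ produces the needed contradiction. I expect no further obstacle, as all the heavy lifting (the trichotomy, the Hrushovski/$1$-Hrushovski dichotomy, and Kechris--Rosendal's theorem) is already in place; this corollary is a bookkeeping assembly of those pieces.
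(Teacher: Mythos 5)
Your proposal is correct and follows essentially the same route as the paper: Hrushovski plus free amalgamation gives condition (ii) of Theorem 6.2 in \cite{KR} and hence ample generics, ample generics give comeager similarity classes since diagonally conjugate tuples are topologically similar, and Corollary \ref{thm1} together with the standing non-discreteness hypothesis at the fixed $n$ closes the cycle. Your explicit handling of the quantifier on $n$ in the last step is a point the paper leaves implicit, but the argument is the same.
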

\begin{proof}
Hrushovski property together with  the free amalgamation property easily imply that (ii) of Theorem 6.2 from \cite{KR} is satisfied
 (cf. pages 332-333 in \cite{KR}). Therefore $\Aut(M)$ has ample generics, and hence 
there exists a comeager $n$-similarity class in $\Aut(M)$ for every $n$. 

On the other hand, if there exists a comeager $n$-similarity class in $\Aut(M)$ for every $n$, 
Corollary \ref{thm1} immediately implies that $M$ has the Hrushovski property.
\end{proof}
\begin{remark}
{\rm There are structures $M$ without the free amalgamation property, for which amalgamation is very canonical, and by essentially the same argument   
the Hrushovski property implies that $\Aut(M)$ has ample generics. An example of such an $M$ is  the rational Urysohn space (see pages 332-333
in \cite{KR}). }
\end{remark}  





Below, $\QQ$ denotes the ordered set of rational numbers, and $\QU_{\prec}$ denotes the randomly ordered rational Urysohn space, i.e. the Fra\"{i}ss\'{e} limit of the class of linearly ordered finite metric spaces with rational distances. Clearly, for $M=\QQ$ or $M=\QU_{\prec}$, $M$ has no algebraicity, therefore every finite subset of $M$ is algebraically closed, also every non-identity element in $\Aut(M)$ generates a discrete group. This implies that every element, and hence also every tuple of elements, in $\Aut(M)$, generates a 
non-precompact group. It was proved in \cite[Lemma 3.13]{Sl} and \cite[Lemma 5.7]{Sl} that the set of pairs generating a non-discrete group is comeager in $\Aut(M)^2$. Thus, we get

\begin{corollary}[Slutsky]
\label{co:Slutsky}
Every $2$-dimensional class of topological similarity in $\Aut(M)$, where $M=\QQ$ or $M=\QU_{\prec}$, is meager.
\end{corollary}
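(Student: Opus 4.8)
The plan is to apply Corollary~\ref{thm1} together with the two combinatorial facts recalled just before the statement: namely, that for $M = \QQ$ or $M = \QU_{\prec}$ every finite subset is algebraically closed (so in particular algebraic closures of finite sets are finite, and the hypotheses of Theorem~\ref{th:MeagSimClass} and Lemma~\ref{le:NonConv} apply), and that by \cite[Lemma 3.13]{Sl} and \cite[Lemma 5.7]{Sl} the set of pairs $\bar f \in \Aut(M)^2$ generating a non-discrete subgroup is comeager. First I would fix $n = 2$ and observe that, since $M$ has no algebraicity, every non-identity element of $\Aut(M)$ generates a discrete (hence infinite, hence non-precompact) subgroup; consequently \emph{every} tuple in $\Aut(M)$ generates a non-precompact group — in particular comeagerly many $2$-tuples do. Combined with Slutsky's comeagerness result, this gives that comeagerly many $2$-tuples in $\Aut(M)$ generate a non-precompact and non-discrete subgroup.

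Next I would invoke Theorem~\ref{th:MeagSimClass} directly with $n = 2$: its hypothesis is precisely that for some $n$ there are comeagerly many $n$-tuples generating a non-precompact and non-discrete subgroup, which we have just verified for $n = 2$. The conclusion of that theorem is that every $2$-dimensional topological similarity class in $G = \Aut(M)$ is meager, which is exactly the assertion of Corollary~\ref{co:Slutsky}.

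There is essentially no obstacle here — the corollary is a direct instantiation of Theorem~\ref{th:MeagSimClass} once one feeds in the two external inputs. The only point requiring a word of care is that the comeagerness of the ``non-discrete'' set and the (trivial, in this setting) comeagerness — indeed, totality — of the ``non-precompact'' set must be combined: the intersection of a comeager set with the whole space is comeager, so comeagerly many $2$-tuples lie in both, and this is what Theorem~\ref{th:MeagSimClass} needs. One could alternatively route the argument through Theorem~\ref{th:Trichotomy} (the trichotomy): for any $2$-tuple $\bar f$, it is never precompact and, for comeagerly many $\bar f$, not discrete, so by the trichotomy comeagerly many $\bar f$ have meager similarity class; but since a similarity class is either meager or comeager in some open set, and one cannot have comeagerly many tuples each sitting in its own distinct non-meager similarity class, in fact \emph{all} similarity classes are meager. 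I would present the first, more direct route as the proof, mentioning the trichotomy route only as an aside if space permits.
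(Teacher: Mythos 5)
Your proposal is correct and follows essentially the same route as the paper: the no-algebraicity observations give finiteness of algebraic closures and non-precompactness of (comeagerly many) $2$-tuples, Slutsky's Lemmas 3.13 and 5.7 give comeager non-discreteness, and Theorem~\ref{th:MeagSimClass} with $n=2$ then yields the statement, exactly as in the paper's argument preceding Corollary~\ref{co:Slutsky}.
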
  



We will now study topological similarity classes in $L_0(G)$. 

\begin{lemma}
\label{le:NonConvL0}
Let $M$ be a structure such that algebraic closures of finite subsets of $M$ are finite, and let $G=\Aut(M)$. Suppose that, for some $n$, there are comeagerly (non-meagerly) many $n$-tuples in $G$ generating a non-precompact subgroup. Then every $n$-tuple in $L_0(G)$ with a non-meager (comeager) class of topological similarity generates a discrete group.
\end{lemma}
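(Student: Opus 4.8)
The strategy is to reduce the $L_0(G)$ statement to the already-proven Lemma~\ref{le:NonConv} by exploiting the structure of $L_0(G)$ as acting on $L_0(M) = L_0(X,\mu;M)$, where $M$ carries the discrete topology on its underlying set (so $L_0(M)$ is the Polish $L_0(G)$-space induced by the evaluation action $G \acts M$). First I would reformulate discreteness: an $n$-tuple $\bar{f}$ in $L_0(G)$ generates a non-discrete group iff there is a sequence of reduced words $w_k \in F_n$, not eventually trivial in the abstract group $\gen{\bar f}$, with $w_k(\bar f) \to e$ in $L_0(G)$. Concretely, $w_k(\bar f)\to e$ in the convergence-in-measure topology means $\mu\{x : w_k(\bar f)(x) \ne e_G\} \to 0$, i.e.\ for each $x$ outside a small-measure set, $w_k(\bar f)(x) = w_k(\bar f(x))$ acts trivially on the relevant finite piece of $M$. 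The key observation is the pointwise one: if $\bar f(x)$ generates a discrete subgroup of $G$ for almost every $x$, the ``bad'' words cannot be made to converge; conversely non-discreteness of $\gen{\bar f}$ in $L_0(G)$ forces non-discreteness of $\gen{\bar f(x)}$ in $G$ on a positive-measure set.

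\textbf{Key steps.} (1) Show the set $\mathcal D_n = \{\bar g \in L_0(G)^n : \gen{\bar g} \text{ is discrete}\}$ and, complementarily, the set of non-discrete tuples, are each invariant under topological similarity, so that the conclusion ``$\bar f$ generates a discrete group'' is a property of the whole similarity class --- hence it suffices to show that the non-discrete tuples with non-meager (resp.\ comeager) similarity class form a set meeting the complement of the target, or directly that a non-discrete tuple with non-meager similarity class cannot exist under the hypothesis. (2) Suppose $\bar f \in L_0(G)^n$ generates a non-discrete group and has non-meager similarity class $C$; then $C$ is comeager in some non-empty open $V \subseteq L_0(G)^n$. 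Pick, via Lemma~\ref{le:NonDisc} applied in the Polish group $L_0(G)$, a sequence of reduced words $w_k = v_k u_k$ hitting every reduced word as a suffix infinitely often, with $w_k(\bar g)\to e$ for every $\bar g \in C$. (3) Now run the argument of Lemma~\ref{le:NonConv} inside $L_0(G)$: by the hypothesis and a Kuratowski--Ulam / Fubini-type argument on $X \times L_0(G)^n$-style neighbourhoods (using the step-function basis $[s,\delta]$ and the identification $L_0(G) \cong \prod_i L_0(A_i,\mu_i;G)$ over a measurable partition), find a positive-measure $A_0 \subseteq X$, an element $a \in M$, and an $n$-tuple $\bar p$ of partial automorphisms such that on the basic open set determined by $\bar p$ and $A_0$, comeagerly many restrictions $\bar g \restriction A_0$ have infinite $\gen{\bar g\restriction A_0}$-orbit of $a$; then, exactly as in Lemma~\ref{le:NonConv}, build by finite extension (using algebraic closedness of domains of partial automorphisms, valid since $\acl$ of finite sets is finite) a dense $G_\delta$ of tuples $\bar g$ with $w_k(\bar g)(x)(a) \ne a$ for infinitely many $k$ on a fixed positive-measure portion of $A_0$ --- contradicting $w_k(\bar g)\to e$ in measure. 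For the parenthetical ``non-meager $\Rightarrow$ comeager'' half one uses that the non-precompact hypothesis holds on a non-meager, hence on a comeager-in-some-open, set, and similarity classes have the Baire property so non-meager classes are comeager in an open set.

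\textbf{Main obstacle.} The genuinely new difficulty compared with Lemma~\ref{le:NonConv} is that ``$w_k(\bar f) \to e$ in $L_0(G)$'' is a measure-theoretic, not pointwise, statement: the small exceptional sets on which $w_k(\bar f)(x) \ne e$ depend on $k$, so one cannot simply fix a single good $x$ and quote Lemma~\ref{le:NonConv}. I expect the cleanest fix is to pass to a positive-measure set $A_0$ and a subsequence along which, by a diagonal/Borel--Cantelli argument, the exceptional sets shrink fast enough that on a fixed positive-measure $A_1 \subseteq A_0$ every $x$ sees $w_k(\bar f(x)) = e_G$ for all large $k$ --- then the finite-extension construction of Lemma~\ref{le:NonConv} can be carried out uniformly (or measurably-selected) over $x \in A_1$ using the Jankov--von Neumann uniformization already invoked in Section~3, producing an element of $[s,\delta]$-type neighbourhoods witnessing $w_k(\bar g)(a) \ne a$ on positive measure. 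Handling this uniform-in-$x$ extension while keeping everything Borel, and combining it correctly with the Kuratowski--Ulam decomposition of $L_0(G)$, is where the real work lies; the rest is a transcription of the proof of Lemma~\ref{le:NonConv}.
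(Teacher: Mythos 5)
Your overall skeleton does match the paper's: reduce, via Lemma~\ref{le:NonDisc} and the Baire property of similarity classes, to showing that for a suitable fixed sequence $\{w_k\}$ the set of tuples $\bar f\in L_0(G)^n$ with $w_k(\bar f)\not\to e$ is comeager (non-meager), then select data Borel-measurably in $x$ (category quantifiers plus Jankov--von Neumann) and pigeonhole to a single $a\in M$ and a single tuple $\bar p$ of partial automorphisms on a non-null set. However, the genuinely new ingredient of the paper's proof is missing from your sketch, and your ``main obstacle'' paragraph mislocates where the difficulty sits. In the density step one is handed an \emph{arbitrary} $\bar f_1$ in an open set and must produce $\bar f_2$ close to it and a single $l\geq k$ such that $\mu\{x: w_l(\bar f_2(x))(a)\neq a\}$ exceeds a \emph{fixed} threshold (the paper's $\epsilon_0/4$, where $\epsilon_0$ is chosen so that $g(a)\neq a$ implies $d(g,e)>\epsilon_0$; without a uniform threshold, membership in all the sets $P_k$ does not contradict convergence). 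For each $x$ in the relevant set $W_0$ one only gets an $x$-dependent extension $\bar q_x$ of $\bar p$ and an $x$-dependent word $u_x$ with $u_x(\bar q_x)(a)$ undefined, while the sequence $\{w_k\}$ was fixed in advance; so one needs a \emph{single} word $u$ (hence a single $l$ with $w_l=vu$) such that $u(\bar q_x)(a)$ is undefined simultaneously on a subset $W_1\subseteq W_0$ of measure $>\epsilon_0/4$. The paper manufactures this $u$ via the sets $A_{v,m}$ (``$v^m(\bar q_x)(a)$ undefined'') and $B_{v,m}$ (``$v^m(\bar q_x)(a)=a$''), the composite words $u_i=v_i^{m_i}\cdots v_0^{m_0}$, and a Borel--Cantelli argument showing that $\bigcup_i A_{v_i,m_i}$ exhausts $W_0$ up to null sets. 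Your sketch's ``carry out the finite-extension construction uniformly (or measurably-selected) over $x$'' presupposes exactly this point, and your Borel--Cantelli/subsequence extraction is applied to the wrong object: to the exceptional sets of the convergence $w_k(\bar f)\to e$ for the originally given $\bar f$, whereas in the density argument no convergence hypothesis is available for $\bar f_1$, and almost-everywhere convergence along a subsequence of $w_k(\bar f)$ plays no role in making the sets $P_k$ dense.

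A secondary problem is the ``key observation'' of your plan, that non-discreteness of $\gen{\bar f}$ in $L_0(G)$ forces non-discreteness of $\gen{\bar f(x)}$ on a positive-measure set. This pointwise principle is not justified and is false in general: in $G=\prod_{i\geq 1}\mathbb{Z}/2^i\mathbb{Z}$, let $f$ take the value of the $i$-th coordinate generator on a set of measure $2^{-i}$; then every $\gen{f(x)}$ is finite, hence discrete, yet $2^kf\neq e$ in $L_0(G)$ and $2^kf\to e$ in measure, so $\gen{f}$ is non-discrete. (This $G$ is compact, so the lemma's hypothesis fails there; the point is only that the reduction cannot be taken for granted.) The paper never uses such a fiberwise reduction; it works directly with the quantitative density argument described above. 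Since your argument's correctness hinges on the single-word, fixed-threshold step that is neither stated nor provable by the tools you list, the proposal as it stands has a genuine gap.
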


\begin{proof}
As in the proof of Lemma \ref{le:NonConv}, it suffices to show that for every sequence $\{w_k\}$ of reduced words in $F_n$ such that for every reduced word $u$ there exist infinitely many $k$ with $w_k=v_ku$, the set
\[ P= \{ \bar{f} \in L_0(G)^n \colon w_k(\bar{f}) \not \rightarrow e \} \]
is comeager (non-meager) in $L_0(G)^n$.

Suppose that there are comeagerly many $n$-tuples in $G$ generating a non-precompact subgroup of $G$.
Fix a sequence $\{w_k\}$ as in the statement of the lemma. Similarly to the proof of Lemma \ref{le:NonConv}, we show that for every open, non-empty $V \subseteq L_0(G)^n$ there exists $V_0 \subseteq V$ such that $w_k(\bar{f}) \not \rightarrow e$ for comeagerly many $\bar{f} \in V_0$.

Fix a non-empty open $V \subseteq L_0(G)^n$. Fix $\bar{f} \in V$, and $0<\epsilon<1$ such that $U_\epsilon(\bar{f}) \subseteq V$. Without loss of generality, we can assume that the tuple $\bar{f}$ consists of Borel mappings. As in Lemma \ref{le:NonConv}, for every $x \in X$ there is $a_x \in M$, and an $n$-tuple $\bar{p}_x$ of partial automorphisms of $M$ such that $[\bar{p}_x] \subseteq U_\epsilon(\bar{f}(x)) \subseteq G$ for every $x \in X$, and the orbit $\gen{\bar{g}}. a_x$ is infinite for comeagerly many $\bar{g} \in [\bar{p}_x]$. Observe that the functions $x \mapsto a_x$, $x \mapsto \bar{p}_x$ can be taken to be Borel. Indeed, let $\{a_m \}$ be an enumeration of $M$, and let $\{\bar{p}_n\}$ be an enumeration of all partial automorphisms of $M$. Then we can put $a_x=a_{m_x}$, $\bar{p}_x=\bar{p}_{n_x}$, where
\[ m_x=\min \{ m \in  \NN\colon \exists^*_{\bar{g} \in G} \ ( \bar{g} \in U_\epsilon(\bar{f}(x)) \mbox{ and } \left| \gen{ \bar{g}}. a_m \right|=\infty) \}, \]
\[ n_x=\min \{ n \in \NN \colon [\bar{p}_n] \subseteq U_\epsilon(\bar{f}(x)) \mbox{ and }
							\forall^*_{\bar{g} \in G} \ (\bar{g} \in [\bar{p}_m] \mbox{ and } \left| \gen{ \bar{g} }. a_{m_x} \right|=\infty) \}. \] 
By the Montgomery-Novikov theorem, category quantifiers over Borel sets yield Borel sets, so the above formulas define Borel functions. In particular, the set 
\[ \{ (x,\bar{g}) \in X \times G^n\colon \bar{g} \in [\bar{p}_x] \} \]
is Borel, and, using the Jankov-von Neumann theorem, we can choose $\bar{f}_0 \in U_\epsilon(\bar{f})$ such that $\bar{f}_0(x) \in [\bar{p}_x]$ for every $x \in X$.

Fix $a \in M$ and an $n$-tuple of partial automorphisms $\bar{p}$ such that the set
\[ W=\{ x \in X\colon \ a_x=a, \bar{p}_x=\bar{p} \} \]
is not null. Fix $\epsilon_0>0$ such that, for $g \in G$, $g(a) \neq a$ implies that $d(g,e) >\epsilon_0$, and $\bar{f}_1 \in U_{\epsilon_0}(\bar{f}_0)$ implies that $\bar{f}_1(x) \in [\bar{p}]$ on some $W_0 \subseteq W$ (which depends on $\bar{f}_1$) with $\mu(W_0)=\epsilon_0$. Set $V_0=U_{\epsilon_0}(\bar{f}_0)$.

For $k \in \NN$, let 
\[ P_k=\{ \bar{h} \in V_0\colon\, \exists l \geq k \ \mu( \{ x \in X\colon w_l(\bar{h}(x))(a) \neq a\}) > \frac{\epsilon_0}{4} \}. \]
It is easy to see that each $P_k$ is open. We will show that each $P_k$ is also dense in $V_0$, which will imply (as in Lemma \ref{le:NonConv}) that $V_0$ is as required.

Fix $\bar{f}_1 \in V$, $\delta>0$, and a set $W_0$ as above. Because $\bar{f}_1(x) \in [\bar{p}]$ for $x \in W_0$, for every $x \in W_0$ there is an $n$-tuple $\bar{q}_x$ of partial automorphisms of $M$ extending $\bar{p}$, and $u_x \in F_n$ such that $[\bar{q}_x] \subseteq U_\delta(\bar{f}_1(x))$, and $u_x(\bar{q}_x)(a)$ is not defined.

We will find $W_1 \subseteq W_0$ with $\mu(W_1) > \epsilon_0/4$, and a single $u \in F_n$ such that $u(\bar{q}_x)(a)$ is not defined for $x \in W_1$.
For $v \in F_n$, $m \in \NN$, let
\[ A_{v,m}= \{ x \in W_0 \colon \ v^m(\bar{q}_x)(a) \mbox{ is not defined} \}, \]
\[ B_{v,m}= \{ x \in W_0\colon \ v^m(\bar{q}_x)(a)=a \}. \] 
Let $\{v_i\}$ be an enumeration of $F_n$. Note that for every $i \in \NN$ we can fix $m_i \in \NN$ such that
\[ \mu(A_{v_i,m_i} \cup B_{v_i,m_i}) \geq (1-2^{-(i+3)}) \epsilon_0.\]
Put $A_v=\bigcup_i A_{v_i,m_i}$, $B_v=\bigcup_i B_{v_i,m_i}$, and $C=\bigcap_v (A_v \cup B_v)$. Then $\mu(C) > 3/4 \epsilon_0$.

Clearly, for every $i \in \NN$, and $u_i=v^{m_i}_{i}\ldots v^{m_0}_{0}$, either $u_i(\bar{q}_x)(a)$ is not defined  or $u_i(\bar{q}_x)(a)=a$ for all $x \in C$. Now, by the Borel-Cantelli lemma applied to the sequence $\{ B_v \setminus (A_{v_i,m_i} \cup B_{v_i,m_i}) \}_i$, for almost all $x \in B_v \setminus A_v$ there is $i_0$, which depends on $x$, such that $x \in \bigcap_{i \geq i_0} B_{v_i,m_i}$. But for every $x \in W_0$ there are in fact infinitely many $v \in F_n$ such that $v(\bar{q}_x)(a)$ is not defined, so each $\bigcap_{i \geq i_0} B_{v_i,m_i}$ must be empty. It follows that $B_v\setminus A_v$ is null, and, since $\mu(A_v \cup B_v)=\epsilon_0$, that $\mu(\bigcup_i A_{v_i,m_i})=\epsilon_0$. Fix $j_0 \in \NN$ such that $\mu(\bigcup_{i=0}^{j_0} A_{v_i,m_i}) > 3/4 \epsilon_0$. Then, for $W_1=(\bigcup_{i=0}^{j_0} A_{v_i,m_i}) \cap C$ and $u=u_{j_0}$, we have that $\mu(W_1) > \epsilon_0/4$, and $u(\bar{q}_x)(a)$ is not defined for $x \in W_1$.

Fix $l \geq k$ with $w_l=vu$. Arguing as in the proof of Lemma \ref{le:NonConv}, for every $x \in W_1$, we find $\bar{r}_x$ extending $\bar{q}_x$, and such that $w_l(\bar{r}_x)(a)$ is defined and not equal to $a$. It is straightforward to verify that the function $x \mapsto \bar{r}_x$ can be taken to be Borel (just write down the formula defining $\bar{r}_x$ to see that it is Borel). By the Jankov-von Neumann theorem, there exists $\bar{f}_2 \in U_{\delta}(\bar{f}_1)$ such that $\bar{f}_1(x) \in [\bar{r}_x]$ for $x \in W_1$. Because $\mu(W_1) > \epsilon_0/4$, and $d(w_l(\bar{f}_2(x)),\Id) >\epsilon_0$ for $x \in W_1$, we get that $\rho(w_l(\bar{f}_2),\Id) > \epsilon_0/4$, i.e., $\bar{f}_2 \in P_k$. As $\bar{f}_1$ and $\delta$ were arbitrary, this shows that $P_k$ is dense.

The case that there are non-meagerly many $n$-tuples in $G$ generating a non-precompact subgroup is essentially the same. 
\end{proof}


\begin{lemma}
\label{le:NonDiscL0}
Let $G$ be a non-archimedean Polish group. Suppose that, for some $n$, $G$ has a dense $n$-diagonal conjugacy class, and the set of $n$-tuples in $G$ generating a non-discrete group is comeager. Then the set of $n$-tuples in $L_0(G)$ generating a non-discrete group is comeager as well.
\end{lemma}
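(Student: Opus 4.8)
The plan is to prove that the set $D$ of $n$-tuples $\bar f\in L_0(G)^n$ for which $\gen{\bar f}$ is discrete in $L_0(G)$ is meager; its complement is then the asserted comeager set. Fix a countable basis $\{V_k\}$ of open neighbourhoods of the identity $e$ in $L_0(G)$. Since $\gen{\bar f}$ is discrete exactly when some $V_k$ meets it only in $e$, we have $D=\bigcup_k D_k$, where
\[ D_k=\{\bar f\in L_0(G)^n\colon\ \forall w\in F_n\ (w(\bar f)\in V_k\Rightarrow w(\bar f)=e)\}. \]
Each $D_k$ is closed, being a countable intersection over $w\in F_n$ of preimages of the closed set $(L_0(G)\setminus V_k)\cup\{e\}$ under the continuous map $\bar f\mapsto w(\bar f)$. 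So it suffices to show that every $D_k$ has empty interior.

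The preparatory step is to fix a single $\bar c\in G^n$ that both has a dense $n$-diagonal conjugacy class in $G^n$ and generates a non-discrete subgroup of $G$. Such a $\bar c$ exists: the set of $n$-tuples with dense diagonal conjugacy class equals $\bigcap_i(G\cdot O_i)$, where $\{O_i\}$ is a countable basis of $G^n$ and $G\cdot O_i$ is the saturation of $O_i$ under diagonal conjugation, which is open (a union of homeomorphic copies of $O_i$); so this set is $G_\delta$, it is nonempty by hypothesis, hence dense and comeager, and it therefore meets the comeager set of $n$-tuples generating a non-discrete subgroup (again a hypothesis). Fix such a $\bar c$. Non-discreteness of $\gen{\bar c}$ says that for every neighbourhood $V'$ of $e$ in $G$ there is a word $w\in F_n$ with $w(\bar c)\in V'$ and $w(\bar c)\neq e$.

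Now fix $k$ and a nonempty open $V\subseteq L_0(G)^n$; I produce $\bar f\in V\setminus D_k$. Using density of step functions and the fact that the sets $[h,\epsilon]$ form a basis, I may assume $V=[\bar s,\delta]$, where $\bar s\colon X\to G^n$ is a step function taking values $\bar a_1,\dots,\bar a_m$ on finitely many pieces $I_1,\dots,I_m$; also fix $\eta>0$ with $[\Id,\eta]\subseteq V_k$. For each $j\le m$, density of the conjugacy class of $\bar c$ yields $g_j\in G$ with $g_j\bar c g_j^{-1}\in U_\delta(\bar a_j)$. Since the conjugations $h\mapsto g_jhg_j^{-1}$, $j\le m$, are finitely many continuous maps fixing $e$, there is a neighbourhood $V'$ of $e$ in $G$ with $g_jV'g_j^{-1}\subseteq U_\eta(e)$ for all $j\le m$; by the previous paragraph, fix $w\in F_n$ with $w(\bar c)\in V'$ and $w(\bar c)\neq e$. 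Finally, define $\bar f\in L_0(G)^n$ by $\bar f(x)=g_{j(x)}\bar c g_{j(x)}^{-1}$, where $j(x)$ is the index of the piece containing $x$; this is a finite-valued measurable map. Then $d_{G^n}(\bar f(x),\bar s(x))<\delta$ for every $x$, so $\bar f\in[\bar s,\delta]=V$; since word evaluation commutes with conjugation and the operations of $L_0(G)$ are pointwise, $w(\bar f)(x)=g_{j(x)}\,w(\bar c)\,g_{j(x)}^{-1}\in U_\eta(e)$ for every $x$, whence $w(\bar f)\in[\Id,\eta]\subseteq V_k$; and $w(\bar c)\neq e$ forces $w(\bar f)(x)\neq e$ for every $x$, so $w(\bar f)\neq e$ in $L_0(G)$. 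Thus $\bar f\notin D_k$, $D_k$ has empty interior, $D$ is meager, and the conclusion follows.

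The step I expect to be the main obstacle is the coordination in the third paragraph: a single word $w$ must fall near $e$ on every piece of the step function simultaneously, which would be impossible were the values $\bar f\rest I_j$ chosen independently. The device is to take all these values to be conjugates of the single tuple $\bar c$ whose generated group is non-discrete, so that one word $w$ with $w(\bar c)$ near $e$ is transported near $e$ by each of the finitely many conjugations at once; and the secondary point that such a $\bar c$ exists at all — i.e. that having a dense conjugacy class and generating a non-discrete group are compatible — is exactly what the $G_\delta$/Baire-category argument of the second paragraph secures.
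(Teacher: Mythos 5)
Your proof is correct, but it takes a genuinely different route from the one in the paper. The paper works inside the \fra framework: it reduces comeagerness to density on step functions, then to a finitary statement about tuples of partial automorphisms, and amalgamates the finitely many pieces of a step function into a single system $(A,\bar{p})$ using \cite[Theorem 2.11]{KR}, which translates the dense diagonal conjugacy class hypothesis into the joint embedding property for such systems; a word $w$ with $w(\bar{q})\neq e$ fixing $A$ pointwise is then found from the comeagerness hypothesis in $G$ and pulled back to each piece by ultrahomogeneity. You instead argue purely topologically: a Baire category argument in $G^n$ (the tuples with dense diagonal conjugacy class form a dense $G_\delta$, hence meet the comeager set of non-discretely generating tuples) produces a single tuple $\bar{c}$ that both has dense diagonal conjugacy class and generates a non-discrete group, and then a step function whose values are conjugates $g_j\bar{c}g_j^{-1}$ approximates any prescribed step function while a single word $w(\bar{c})$ near $e$ is carried uniformly near $e$ by the finitely many conjugations; writing the discretely generating tuples as a countable union of closed sets $D_k$ and showing each has empty interior finishes the argument. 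Your approach buys two things: it never uses that $G$ is non-archimedean (so it proves the statement for arbitrary Polish $G$ with the two hypotheses), and it avoids the machinery of partial automorphisms and \cite[Theorem 2.11]{KR} entirely; the paper's approach, in turn, stays within the combinatorial language of partial automorphisms used throughout that section and isolates a reusable finitary extension statement. All the individual steps you take (closedness of the sets $D_k$, openness of the conjugation-saturations, the choice of $V'$ with $g_jV'g_j^{-1}\subseteq U_\eta(e)$, and the fact that $w(\bar{f})$ differs from the identity everywhere, hence in $L_0(G)$) check out.
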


\begin{proof}
We can think of $G$, with a fixed metric $d$, as the automorphism group of the Fra\"{i}ss\'{e} limit $M$ of some Fra\"{i}ss\'{e} class $\mathcal{F}$. As the set of $n$-tuples generating a non-discrete group is a $G_\delta$ in $L_0(G)$, we only need to show that it is dense. Step functions $f_{\bar{a}}$ form a dense set in $L_0(G)$, so it suffices to prove that for every $k$, and every sequence $\bar{p}_j=(p^1_j, \ldots, p^k_j)$, $j \leq n$, of $k$-tuples of partial automorphisms of $M$ there exist $k$-tuples $\bar{q}_j=(q^1_j, \ldots, q^k_j)$, $j \leq n$, of partial automorphisms extending $\bar{p}_j$, respectively, and $w \in F_n$ such that $w(q^i_1, \ldots, q^i_n) \neq e$ but $w(q^i_1, \ldots, q^i_n)(a)=a$ for every $i \leq k$ and $a \in \bigcup_j\dom(p_i^j)$. This is because then  it follows  that for every sequence $\bar{p}_j=(p^1_j, \ldots, p^k_j)$, $j \leq n$, of $k$-tuples of partial automorphisms of $M$, there exist step functions $f_{\bar{a}_1}, \ldots, f_{\bar{a}_n} \in L_0(G)$, where each $\bar{a}_j=(a^1_j, \ldots, a^k_j)$ is a tuple of automorphisms of $M$ extending $(q^1_j, \ldots, q^k_j)$, such that for every $\epsilon>0$ there is $w \in F_n$ with $0<d(w(a^i_1, \ldots, a^i_n),e)<\epsilon$ for $i \leq k$, and thus $0<\rho(w(f_{\bar{a}_1}, \ldots, f_{\bar{a}_1}),e)<\epsilon$. 

Fix such $\bar{p}_j$, $j \leq n$. Let $A_i \in \mathcal{F}$, $i \leq k$, be structures generated by $\bigcup_j (\dom(p_j^i) \cup \rng(p_j^i))$. As $G$ has a dense $n$-diagonal conjugacy class, by \cite[Theorem 2.11]{KR}, the family of all $(A,\bar{p})$, where $A$ is a finite subset of $M$, and $\bar{p}$ is an $n$-tuple of partial automorphisms of $A$, has the joint embedding property. Thus, there exists $A \in \mathcal{F}$, an $n$-tuple $\bar{p}=(p^1, \ldots, p^n)$ of partial automorphisms of $A$, and embeddings $e_i\colon (A_i, (p^i_1, \ldots, p^i_n)) \rightarrow (A, \bar{p})$, $i \leq k$, that is, mappings $e_i\colon A_i \rightarrow A$ that are embeddings of $A_i$, and are such that
\[ e_i \circ p_j^i \subseteq p^j \circ e_i. \]
for every $i \leq k$, $j \leq n$. Because the set of $n$-tuples in $G$ generating a non-discrete group is comeager in $G^n$, there exists an extension $\bar{q}=(q^1, \ldots, q^n)$ of $\bar{p}$, and $w \in F_n$ such that $w(\bar{q}) \neq e$ but $w(\bar{q})(a)=a$ for $a \in A$. As $M$ is a ultrahomogeneous, we can extend each $e_i$ to a partial automorphism $f_i$ in such a way that $f_i^{-1}w(\bar{q}) f_i(a)$ is defined for every $a \in A_i$. Put $\bar{q}_j$, $j \leq n$, to be tuples of the form $(f_i^{-1}q^1f_i, \ldots, f_i^{-1}q^kf_i)$. Then each $\bar{q}_j$ extends $(p^1_j, \ldots, p^k_j)$, and $w(q^i_1, \ldots, q^i_n)\neq e$ but $w(q^i_1, \ldots, q^i_n)(a)=a$ for $i \leq k$, $j \leq n$ and $a \in \dom(p^j_i) \subseteq A_i$.
\end{proof}

For the sake of completeness, we also point out the following fact.
\begin{lemma}
\label{le:NonDiscL0_2}
Let $G$ be a Polish group. Suppose that for some $n$, set of $n$-tuples in $L_0(G)$ generating a non-discrete group is comeager. Then the set of $n$-tuples in $G$ generating a non-discrete group is comeager as well.
\end{lemma}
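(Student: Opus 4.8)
The plan is to pull the hypothesis back along the canonical topological embedding $\iota\colon G^n\to L_0(G)^n$ sending an $n$-tuple $\bar g=(g_1,\dots,g_n)$ to the tuple of constant functions $c_{\bar g}=(c_{g_1},\dots,c_{g_n})$. This $\iota$ is a homeomorphism onto its image, and for every $w\in F_n$ one has $w(c_{\bar g})=c_{w(\bar g)}$; hence $f_i\mapsto c_{g_i}$ is a topological group isomorphism of $\gen{\bar g}$ onto $\gen{c_{\bar g}}$, so $\gen{\bar g}$ is non-discrete in $G$ if and only if $\gen{c_{\bar g}}$ is non-discrete in $L_0(G)$. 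Thus $N:=\{\bar g\in G^n:\gen{\bar g}\text{ non-discrete}\}=\iota^{-1}\big(\{\bar f\in L_0(G)^n:\gen{\bar f}\text{ non-discrete}\}\big)$. Since $N=\bigcap_k O_k$ with $O_k=\{\bar g:\exists w\in F_n,\ w(\bar g)\neq e,\ d(w(\bar g),e)<1/k\}$ open, $N$ is $G_\delta$, so it suffices to show $N$ is dense, i.e. that each $O_k$ is dense in $G^n$.

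Fix $k$ and a nonempty open $V\subseteq G^n$; we want $O_k\cap V\neq\varnothing$. If $N\cap V\neq\varnothing$ we are done, so assume $V\subseteq G^n\setminus N$. Writing $G^n\setminus N=\bigcup_m\{\bar g:\forall w\,(w(\bar g)\in B(e,1/m)\Rightarrow w(\bar g)=e)\}$ as an increasing union of closed sets and applying the Baire category theorem inside $V$, we may shrink $V$ so that for some $\delta>0$ \emph{every} $\bar g\in V$ and every $w\in F_n$ satisfy $w(\bar g)\in B(e,\delta)\Rightarrow w(\bar g)=e$. For small $\varepsilon>0$, consider $\mathcal V_\varepsilon:=\{\bar f\in L_0(G)^n:\mu\{x:\bar f(x)\in V\}>1-\varepsilon\}$. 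Since $V$ is open, $\bar f\mapsto\mu\{x:\bar f(x)\in V\}$ is lower semicontinuous, so $\mathcal V_\varepsilon$ is open, and it is nonempty (it contains $c_{\bar b}$ for any $\bar b\in V$). By hypothesis the non-discrete tuples are comeager in $L_0(G)^n$, hence comeager in $\mathcal V_\varepsilon$, so we may pick $\bar f\in\mathcal V_\varepsilon$ with $\gen{\bar f}$ non-discrete; fix reduced words $w_j$ with $w_j(\bar f)\neq c_e$ and $\rho(w_j(\bar f),c_e)\to 0$, and, passing to a subsequence, assume $w_j(\bar f)\to c_e$ $\mu$-almost everywhere, so $w_j(\bar f(x))\to e$ for almost all $x$.

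The proof would be finished by producing a single $x_0$ with $\bar f(x_0)\in V$ and $w_j(\bar f(x_0))\neq e$ for infinitely many $j$: then $\gen{\bar f(x_0)}$ contains the nontrivial elements $w_j(\bar f(x_0))$ converging to $e$, so $\bar f(x_0)\in N\cap V$, contradicting $V\subseteq G^n\setminus N$. This extraction is the main obstacle. On the "good" set $\{x:\bar f(x)\in V\}$ the uniform discreteness of $V$ forces $w_j(\bar f(x))\neq e$ to imply $d(w_j(\bar f(x)),e)\ge\delta$, and since $\rho(w_j(\bar f),c_e)\to 0$ this confines the portion of $\{x:w_j(\bar f(x))\neq e\}$ lying in the good set to measure tending to $0$; thus the support of $w_j(\bar f)$ threatens to escape entirely into the measure-$\varepsilon$ exceptional set $\{x:\bar f(x)\notin V\}$, and one must rule this out. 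The route I would try is to take $\varepsilon=\varepsilon_m\to 0$, pick non-discrete $\bar f_m\in\mathcal V_{\varepsilon_m}$, analyse $\bar f_m$ restricted to its exceptional set (where the same dichotomy recurs), and combine this with the observation that if the non-discreteness of $\gen{\bar f}$ were witnessed only by words whose supports have vanishing measure, one could hope to destroy it by modifying $\bar f$ on those supports by a fixed constant in $V$ while staying, by lower semicontinuity, inside an open set on which the non-discrete tuples must still be comeager. Turning this heuristic into a genuine contradiction — in particular quantifying "vanishing support" against $\varepsilon$ and controlling what a modification does to \emph{all} words simultaneously — is the only substantial point; the rest is bookkeeping.
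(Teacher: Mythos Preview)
Your overall strategy coincides with the paper's: reduce to density of $N$ via the $G_\delta$ observation, pass from a basic open set $U_1\times\cdots\times U_n\subseteq G^n$ to the corresponding open set $V_1\times\cdots\times V_n\subseteq L_0(G)^n$, pick $\bar f$ there with $\gen{\bar f}$ non-discrete, choose witnessing words $w_k$ with $w_k(\bar f)\to e$, pass to an a.e.-convergent subsequence, and extract some $x$ with $\bar f(x)\in U_1\times\cdots\times U_n$ and $\gen{\bar f(x)}$ non-discrete. The paper's argument is shorter than yours---it does not set up a contradiction or invoke the Baire-category shrinking to uniform discreteness---and simply takes any $x\notin\bigcup_i\{f_i\notin U_i\}$, notes $w_k(\bar f(x))\to e$, and declares $\gen{\bar f(x)}$ non-discrete.

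You are right that this last inference is the entire content of the argument, and you have put your finger on exactly why it is not immediate: from $w_k(\bar f)\neq c_e$ one only gets that $\{x:w_k(\bar f(x))\neq e\}$ has positive measure for each $k$, and nothing rules out these sets eventually avoiding every fixed $x$ in the good set (indeed, your uniform-$\delta$ analysis shows their intersection with the good set has measure tending to~$0$). The paper's proof does not address this point. That said, your proposal does not resolve it either. The heuristic you sketch---taking $\varepsilon_m\to 0$, analysing the recursion on the exceptional set, and ``destroying'' non-discreteness by modifying $\bar f$ on vanishing supports---is not turned into an argument, and it is far from clear how a modification on a small set could be made to control \emph{all} words simultaneously while preserving membership in an open set where non-discrete tuples remain comeager. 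As it stands, your write-up is an accurate diagnosis of the difficulty together with an unfinished plan; the ``only substantial point'' is genuinely substantial and is left open.
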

\begin{proof}
Since the $n$-tuples in $G$ generating a non-discrete subgroup form a $G_\delta$ set in $G^n$, it is enough to show that the set of such tuples is dense in $G^n$.

In $G$, take non-empty open subsets $U_1,\ldots, U_n$. Take some $0<\epsilon<\frac{1}{n}$.
Let $V_i=\{f\in L_0(G)\colon \mu(\{x \in X \colon f(x)\notin U_i\})<\epsilon\}$.
As each $V_i$ is open, we can fix $\bar{f}=(f_1,\ldots, f_n)\in V_1\times\ldots\times V_n$ and a sequence of words
$(w_k)$ such that  $w_k(\bar{f})\to 0$. Without loss of generality, by passing to a subsequence, this convergence is pointwise on a set of full measure. Let $A_i=\{x \in X\colon f_i(x)\notin U_i\}$. But then, for every $x\notin\bigcup_i A_i$ (and by the choice of $\epsilon $ there are such $x$), we have that $w_k(\overline{f}(x))\to 0$, i.e., $\overline{f}(x)\in U_1\times\ldots\times U_n $, and $\overline{f}(x)$ generates a non-discrete subgroup.

\end{proof}

As before, Lemmas \ref{le:NonConvL0} and \ref{le:NonDiscL0} lead to the following:

\begin{theorem}
\label{th:MeagSimClassL0}
Let $M$ be a structure such that algebraic closures of finite subsets of $M$ are finite, and let $G=\Aut(M)$. Suppose that, for some $n$,
\begin{enumerate}
\item $G$ has a dense $n$-diagonal conjugacy class;
\item there are comeagerly many $n$-tuples in $G$ generating a non-precompact and non-discrete subgroup. 
\end{enumerate}
Then each $n$-dimensional class of topological similarity in $L_0(G)$ is meager.
\end{theorem}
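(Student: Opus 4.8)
The plan is to combine the two preceding lemmas in exactly the way the proof of Theorem~\ref{th:Hrushovski} combines Lemma~\ref{le:HruPreComp} with Lemma~\ref{le:NonConv}, but now working inside $L_0(G)$ and using the $L_0$-versions of the relevant lemmas. So let me describe the argument.

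\begin{proof}
Fix $n$ satisfying (1) and (2). By (2), there are comeagerly many $n$-tuples in $G$ generating a non-discrete subgroup of $G$, and also comeagerly many $n$-tuples in $G$ generating a non-precompact subgroup of $G$. From the first of these facts together with assumption (1), Lemma~\ref{le:NonDiscL0} yields that the set of $n$-tuples in $L_0(G)$ generating a non-discrete group is comeager in $L_0(G)^n$. Hence it suffices to show that every $n$-tuple $\bar{f}$ in $L_0(G)$ with a non-meager topological similarity class generates a discrete group: for then no non-meager similarity class can contain a tuple generating a non-discrete group, while comeagerly many tuples do generate a non-discrete group, so every topological similarity class must be meager.

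To prove the displayed claim, apply Lemma~\ref{le:NonConvL0} with the hypothesis that there are comeagerly many $n$-tuples in $G$ generating a non-precompact subgroup (this is the part of (2) we use here). The lemma then says precisely that every $n$-tuple in $L_0(G)$ whose topological similarity class is non-meager generates a discrete group. Combining this with the previous paragraph finishes the proof.
\end{proof}

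The only point requiring a little care is that we must check the two inputs to Lemma~\ref{le:NonDiscL0} and Lemma~\ref{le:NonConvL0} are genuinely available: assumption (1) is exactly the hypothesis of Lemma~\ref{le:NonDiscL0}, and from (2) we extract comeagerness of the tuples generating a non-discrete group (needed for Lemma~\ref{le:NonDiscL0}) and comeagerness of the tuples generating a non-precompact group (needed for Lemma~\ref{le:NonConvL0}), since the conjunction of two comeager properties is comeager. I expect no real obstacle here, as the whole content of the theorem has been pushed into the two technical lemmas, particularly Lemma~\ref{le:NonConvL0} whose proof carries out the delicate Borel-selection and measure-theoretic bookkeeping; the present argument is just the logical bookkeeping that assembles them, mirroring the pattern already established in the non-$L_0$ setting.
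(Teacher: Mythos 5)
Your proof is correct and follows essentially the same route as the paper: both combine Lemma~\ref{le:NonConvL0} (applied with the non-precompactness part of (2)) with Lemma~\ref{le:NonDiscL0} (applied with (1) and the non-discreteness part of (2)); the paper merely phrases it as a contradiction, using Lemma~\ref{le:NonDiscL0} contrapositively, while you argue directly. The only tacit step in your wrap-up — that a non-meager similarity class consists entirely of tuples generating discrete groups — is immediate, since every tuple in the class has that same class as its own similarity class.
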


\begin{proof}
Suppose that there exists an $n$-tuple in $L_0(G)$ with non-meager class of topological similarity. By Lemma \ref{le:NonConvL0}, it generates a discrete group, i.e., the set of $n$-tuples generating a discrete group is non-meager in $L_0(G)$. Then Lemma \ref{le:NonDiscL0} implies that 
the set of $n$-tuples in $G$ generating a non-discrete group is not comeager, which contradicts the assumptions.
\end{proof}

\begin{theorem}
\label{th:TrichotomyL0}
Let $M$ be a structure such that algebraic closures of finite subsets of $M$ are finite, and let $G=\Aut(M)$. Suppose that for some $n$,
$G$ has a dense $n$-diagonal conjugacy class. Then, for every  $n$-tuple $\bar{f}$ in $L_0(G)$:
\begin{itemize}
\item $\langle \bar{f} \rangle$ is precompact, or
\item $\langle \bar{f} \rangle$ is discrete, or
\item the similarity class of $ \bar{f} $ is meager.
\end{itemize}
\end{theorem}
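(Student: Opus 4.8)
The plan is to mirror the proof of Theorem \ref{th:Trichotomy}, replacing Lemma \ref{le:NonConv} by its $L_0$-counterpart Lemma \ref{le:NonConvL0}, and using the dense $n$-diagonal conjugacy class of $G$ to obtain, on the $G$-side, clean ``meager or comeager'' dichotomies for the relevant conjugation-invariant sets. Fix an $n$-tuple $\bar f$ in $L_0(G)$ whose topological similarity class $C$ is non-meager. Since discreteness of $\gen{\bar f}$ (isolation of $e$) and precompactness of $\gen{\bar f}$ (total boundedness, equivalently compactness of the completion) depend only on the abstract topological group $\gen{\bar f}$, they are topological-similarity invariants; so, exactly as in Theorem \ref{th:Trichotomy}, it suffices to show that $\gen{\bar f}$ is precompact or discrete.

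First I record the dichotomies. The sets $Q=\{\bar g\in G^n:\gen{\bar g}\text{ precompact}\}$ and $D=\{\bar g\in G^n:\gen{\bar g}\text{ discrete}\}$ are invariant under the $n$-diagonal conjugation action and have the Baire property ($Q$ is $G_\delta$, as noted in the proof of Lemma \ref{le:HruPreComp}, and $D$ is $F_\sigma$). If such a set is non-meager it is comeager in some non-empty open $W\subseteq G^n$; conjugating $W$ by the elements realizing the dense $n$-diagonal conjugacy orbit shows it is comeager on a dense open set, hence comeager in $G^n$. Thus each of $Q$, $D$ is either meager or comeager. The same argument works in $L_0(G)^n$ once one observes, as in the first paragraph of the proof of Lemma \ref{le:NonDiscL0}, that if $G$ has a dense $n$-diagonal conjugacy class then so does $L_0(G)$ (conjugates of a constant step function with value in the dense $G$-orbit are dense, since over each piece of a given step function one may conjugate into the orbit separately).

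Now run the case analysis. If comeagerly many $n$-tuples in $G$ generate a non-precompact group, Lemma \ref{le:NonConvL0} applies and, as $C$ is non-meager, gives that $\gen{\bar f}$ is discrete; done. Otherwise $G^n\setminus Q$ is not comeager, so by the dichotomy $Q$ is comeager, i.e.\ comeagerly many $n$-tuples in $G$ generate a precompact group; applying Lemma \ref{le:HruPreComp} to the canonical ultrahomogeneous structure with automorphism group $G$ (which has the same algebraic closure operator on finite sets, since that operator is computed from the $G$-action alone), this says exactly that $G$ has the $n$-Hrushovski property. Split again: if $D$ is comeager, then comeagerly many $n$-tuples in $G$ generate a discrete group, so by the contrapositive of Lemma \ref{le:NonDiscL0_2} the $n$-tuples in $L_0(G)$ generating a non-discrete group are not comeager, hence by the $L_0(G)$-dichotomy the $n$-tuples generating a discrete group are comeager; intersecting that comeager set with the non-meager $C$ shows $\gen{\bar f}$ is discrete, and we are done. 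So we are left with the case that $G$ has the $n$-Hrushovski property and comeagerly many $n$-tuples in $G$ generate a non-discrete group.

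In this remaining case I want to conclude that $\gen{\bar f}$ is precompact, and the plan is to show that under these hypotheses comeagerly many $n$-tuples in $L_0(G)$ generate a precompact group: then $C$, being non-meager, meets this comeager set, so $\bar f$ is topologically similar to a precompact-generating tuple and hence $\gen{\bar f}$ is precompact. By the $L_0(G)$-dichotomy it suffices to prove that $\tilde Q=\{\bar f\in L_0(G)^n:\gen{\bar f}\text{ precompact}\}$ is non-meager, and for this I would run a category (Banach--Mazur) argument on the dense set of step functions, in the spirit of Section 3 and of Lemma \ref{le:NonDiscL0}: given a basic open condition, use the $n$-Hrushovski property fiberwise (as in the proof of Lemma \ref{le:HruPreComp}) to extend it, on each piece, to tuples of partial automorphisms growing into automorphisms generating precompact subgroups of $G$, while keeping uniform control across the pieces of the word lengths needed to return to $e$, and choosing all the resulting functions measurably via the Montgomery--Novikov and Jankov--von Neumann selection theorems. \textbf{This last step is the main obstacle:} because $L_0$ of a compact group need not be compact, one cannot deduce precompactness of $\gen{\bar f}$ from fiberwise precompactness, so the proof must genuinely produce the uniformity across fibers---combining the selection arguments of Lemma \ref{le:NonConvL0} with the joint-embedding/ultrahomogeneity input of Lemma \ref{le:NonDiscL0}---and this is where essentially all the work lies.
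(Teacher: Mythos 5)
Your first two cases track the paper fairly closely: like the paper, you transfer the dense $n$-diagonal conjugacy class from $G$ to $L_0(G)$, and you use Lemma~\ref{le:NonConvL0} to conclude discreteness of $\gen{\bar f}$ when sufficiently many tuples in $G$ generate non-precompact subgroups. The paper is more economical at this stage: it first upgrades the non-meager similarity class $C$ to a comeager one, using the induced dense $n$-diagonal conjugacy class of $L_0(G)$ (from \cite[Lemma 4]{KM} and Lemma~\ref{le:Orbit}) and a zero--one law argument, and then applies the ``comeager class'' version of Lemma~\ref{le:NonConvL0}; your separate handling of the discrete case via Lemma~\ref{le:NonDiscL0_2} and the dichotomies for $D$ is correct but not needed, and the detour through Lemma~\ref{le:HruPreComp} and the $n$-Hrushovski property plays no role in the paper's proof.

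The genuine gap is the case you yourself flag: when the generic tuple of $G$ generates a precompact, non-discrete group, you only sketch a plan for showing that non-meagerly many tuples of $L_0(G)$ generate a precompact subgroup and concede that ``this is where essentially all the work lies,'' so as written your argument establishes only two of the three alternatives. The paper does not run any such Banach--Mazur construction: having made $C$ comeager, it ends exactly as in Theorem~\ref{th:Trichotomy}, by intersecting $C$ with the non-meager set of precompact-generating tuples arising from the failure of the hypothesis of Lemma~\ref{le:NonConvL0}. To make that intersection precise inside $L_0(G)^n$ the substantive point is indeed the one you raise, but your claim that fibrewise precompactness cannot yield precompactness is wrong in the non-archimedean setting (the $L_0(\SSS^1)$ phenomenon needs infinite orbits): if $\bar h\in L_0(G)^n$ and almost every fibre group $\gen{\bar h(x)}$ has all orbits in $M$ finite, then given a finite $A\subseteq M$ and $\epsilon>0$ one may discard a set of measure $<\epsilon$ on which the $\gen{\bar h(x)}$-orbit of $A$ has size $>K$; on the remainder the stabilizer in $F_n$ of the tuple enumerating $A$ (under evaluation at $\bar h(x)$) is one of the finitely many subgroups of $F_n$ of index at most $K^{|A|}$, and after partitioning the remainder into finitely many measurable pieces according to this stabilizer, words in the same left coset act identically on $A$; hence the functions $x\mapsto w(\bar h(x))\restriction A$, $w\in F_n$, take only finitely many values off a set of measure $<\epsilon$, so $\gen{\bar h}$ is totally bounded, i.e.\ precompact. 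Combining this with \cite[Lemma 5]{KM}, applied to the comeager set of precompact-generating tuples in $G^n$, gives a comeager set of precompact-generating tuples in $L_0(G)^n$, which meets $C$ and finishes your third case; without this (or some substitute for it) your proposal is incomplete.
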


\begin{proof}
Fix an $n$-tuple $\bar{f}$ in $L_0(G)$, and suppose that its similarity class $C$ is non-meager in $L_0(G)^n$. Our assumption on the existence of a dense $n$-diagonal conjugacy class in $G$ implies that there also exists a dense $n$-diagonal conjugacy class in $L_0(G)$ (see Lemma 4 in \cite{KM} and Lemma \ref{le:Orbit}). Then, since the $n$-tuples in $L_0(G)$ with dense conjugacy classes form a $G_\delta$ set, this implies that comeagerly many $n$-tuples have dense conjugacy class. Thus, $C$ must be in fact comeager.

Now, by the last statement of Lemma \ref{le:NonConvL0}, either $\bar{f}$ generates a discrete group or the assumptions of the lemma are not satisfied, i.e., there are non-meagerly many tuples in $G$ that generate a precompact group. But then the set of such tuples must intersect $C$, which implies that $\bar{f}$ also generates a precompact group.
\end{proof}

\begin{corollary}
Every $2$-dimensional class of topological similarity in $L_0(\Aut(M))$, where $M=\QQ$ or $M=\QQ \mathbb{U}_{\prec}$, is meager.
\end{corollary}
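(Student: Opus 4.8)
The plan is to derive this corollary directly from Theorem~\ref{th:TrichotomyL0} together with Corollary~\ref{co:Slutsky} and the basic facts about $\QQ$ and $\QU_\prec$ already recorded in the excerpt. Fix $M=\QQ$ or $M=\QU_\prec$ and set $G=\Aut(M)$; recall that $M$ has no algebraicity, so algebraic closures of finite subsets of $M$ are finite (indeed equal to the sets themselves), and that every nonidentity element of $G$ generates a discrete subgroup, hence every tuple in $G$ generates a non-precompact subgroup. To apply Theorem~\ref{th:TrichotomyL0} with $n=2$ I first need that $G$ has a dense $2$-diagonal conjugacy class. This is standard: both $\QQ$ and $\QU_\prec$ are \fra limits whose classes of finite structures with $n$-tuples of partial automorphisms have the joint embedding property (for $\QQ$ this is elementary; for $\QU_\prec$ it follows from the metrically-free amalgamation together with the linear order, as used in \cite{Sl}), so by \cite[Theorem 2.11]{KR} there is a dense $n$-diagonal conjugacy class in $\Aut(M)$ for every $n$, in particular for $n=2$. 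Alternatively one can simply cite \cite{Sl}, where this is established en route to the cited Lemmas 3.13 and 5.7.

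With the hypothesis of Theorem~\ref{th:TrichotomyL0} verified, the trichotomy gives that for every $2$-tuple $\bar f$ in $L_0(G)$, either $\gen{\bar f}$ is precompact, or $\gen{\bar f}$ is discrete, or the similarity class of $\bar f$ is meager. So it remains to rule out the first two alternatives occurring on a non-meager set, i.e.\ to show that the set of $2$-tuples in $L_0(G)$ generating a precompact group and the set generating a discrete group are both meager in $L_0(G)^2$. For precompactness: since every $2$-tuple in $G$ generates a non-precompact group, densely many (hence, as this set is $G_\delta$, comeagerly many) $2$-tuples in $G$ generate a non-precompact group; now Lemma~\ref{le:NonConvL0} applies and shows that any $2$-tuple in $L_0(G)$ with non-meager similarity class generates a discrete group — in particular no non-meager set of tuples generates a precompact group. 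For discreteness: by \cite[Lemma 3.13]{Sl} and \cite[Lemma 5.7]{Sl} the set of pairs in $G^2$ generating a non-discrete group is comeager, and $G$ has a dense $2$-diagonal conjugacy class, so Lemma~\ref{le:NonDiscL0} yields that the set of $2$-tuples in $L_0(G)$ generating a non-discrete group is comeager; thus the set generating a discrete group is meager in $L_0(G)^2$.

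Combining these two facts with the trichotomy: if a $2$-dimensional topological similarity class $C$ in $L_0(G)$ were non-meager, then by Theorem~\ref{th:TrichotomyL0} every $\bar f\in C$ generates a group that is precompact or discrete, so $C$ is contained in the union of the two meager sets identified above, contradicting non-meagerness of $C$. Hence every $2$-dimensional topological similarity class in $L_0(\Aut(M))$ is meager, which is the assertion. I expect the only point requiring genuine care — rather than pure bookkeeping — to be the verification that $G=\Aut(M)$ has a dense $2$-diagonal conjugacy class, since the rest is a direct assembly of Theorem~\ref{th:TrichotomyL0}, Lemma~\ref{le:NonConvL0}, Lemma~\ref{le:NonDiscL0}, and the two cited lemmas from \cite{Sl}; but even that point is either folklore for these two specific structures or extractable from \cite{Sl}, so the overall argument is short.
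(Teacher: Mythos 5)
Your overall strategy --- no algebraicity, non-precompactness of all nontrivial pairs, Slutsky's comeagerness of non-discrete pairs, and a dense $2$-diagonal conjugacy class obtained from the joint embedding property, all fed into the $L_0$ machinery of Section 4 --- is essentially the paper's: the paper applies Theorem \ref{th:MeagSimClassL0} and spends its proof on the dense $2$-diagonal conjugacy class, via \cite[Theorem 2.12]{KR} and an explicit joint embedding (place one finite ordered metric space entirely below the other, with all cross-distances equal to a fixed $d_0$, so the unions of the two pairs of partial automorphisms are again partial automorphisms; analogously for $\QQ$). However, your detour through Theorem \ref{th:TrichotomyL0} introduces a genuine gap: your final assembly needs the set of $2$-tuples of $L_0(G)$ generating a \emph{precompact} subgroup to be meager in $L_0(G)^2$, and your justification (``in particular no non-meager set of tuples generates a precompact group'') is a non sequitur --- Lemma \ref{le:NonConvL0} only concerns tuples whose \emph{similarity class} is non-meager, and says nothing about the precompact-generating set in $L_0(G)^2$, whose meagerness is not established anywhere in the paper and would require a separate argument in the spirit of Proposition \ref{pr:Prec} (note also that discrete and precompact are not mutually exclusive, as finite groups show). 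A smaller slip: ``densely many, hence, as this set is $G_\delta$, comeagerly many'' is backwards, since by Lemma \ref{le:HruPreComp} it is the \emph{precompact}-generating set that is $G_\delta$; comeagerness of non-precompact pairs holds here for the trivial reason that only the identity pair of $\Aut(\QQ)$ or $\Aut(\QU_{\prec})$ generates a precompact subgroup.

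The repair is immediate and collapses your argument onto the paper's: once Lemma \ref{le:NonConvL0} gives that every $2$-tuple of $L_0(G)$ with non-meager similarity class generates a discrete group, a non-meager similarity class $C$ would consist entirely of discrete-generating tuples (each member of $C$ has similarity class $C$), hence be contained in the set of discrete-generating tuples, which is meager by Lemma \ref{le:NonDiscL0} together with Slutsky's lemmas and the dense $2$-diagonal conjugacy class --- contradiction. So the trichotomy and the precompact branch are not needed at all; this is exactly Theorem \ref{th:MeagSimClassL0}. Finally, the one point you flag as needing care, the dense $2$-diagonal conjugacy class, is precisely where the paper does its actual work: it is not quoted from \cite{Sl} but proved by the explicit joint embedding described above, so your sketch (which points at the right idea) should be carried out rather than deferred to folklore.
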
  

\begin{proof}
As in Corollary \ref{co:Slutsky}, for $M=\QQ$ or $M=\QQ \mathbb{U}_{\prec}$, we observe that $M$ has no algebraicity and every non-identity element in $\Aut(M)$ generates a discrete  group. This implies that every element, and hence also every tuple of elements in $\Aut(M)$, generates a non-precompact group. It follows from \cite[Lemma 3.13]{Sl}, \cite[Lemma 5.7]{Sl} that the set of pairs generating a non-discrete group is comeager in $\Aut(M)^2$. Thus
we only need to show that $\Aut(M)$ has a dense $2$-diagonal conjugacy class.

To see this for $\Aut(\QU_{\prec})$, by \cite[Theorem 2.12]{KR}, it suffices to observe that the family of all $(A,\bar{p})$, where $A$ is a finite subset of $\QU_{\prec}$, and $\bar{p}$ is a pair of partial automorphisms of $A$, has the joint embedding property. But for all fixed pairs $(p_0^1,p_0^2)$, $(p_1^1,p_1^2)$ of partial automorphisms of $\QU_{\prec}$, we can think of each $p_i^j$ as a partial automorphism of some $QU_i \subseteq \QU_{\prec}$, $i=0,1$, such that $x<y$ and $d(x,y)=d_0>0$ for all $x \in QU_0$, $y \in QU_1$. Thus, $q^1=p_0^1 \cup p_1^1$, $q^2=p_0^2 \cup p_1^2$ are also partial automorphisms of $\QU_{\prec}$, and the identity embeds $(QU_0,(p_0^1,p_0^2))$, $(QU_1,(p_1^1,p_1^2))$ into $(QU_0 \cup QU_1, (q^1,q^2))$.

An analogous observation can be used in the case of $\Aut(\QQ )$.
\end{proof}

Finally, we observe that if, for some non-archimedean Polish group $G$, comeagerly many elements in $G$ generate a precompact group, then the situation is the same for $L_0(G)$. This stands in a sharp contrast with the general case: for example, the unit circle $\SSS^1$ is compact, while
$L_0(\SSS^1)$ is not compact and
 it is not hard to see that comeagerly many elements in $L_0(\SSS^1)$ generate a dense subgroup. 
 Indeed, for each $n$, the set of topological 1-generators of $(\SSS^1)^n$ is 
dense in $(\SSS^1)^n$, in fact
every $n$-tuple that consists of independent irrationals topologically generates $(\SSS^1)^n$. The sequence of groups $\SSS^1$,
 $(\SSS^1)^2$, $(\SSS^1)^3$... can be naturally identified with an increasing sequence of subgroups of $L_0(\SSS^1)$ whose union is dense in $L_0(\SSS^1)$. Since for every open set $U$ in $L_0(\SSS^1)$, $\{x\in L_0(\SSS^1)\colon \{x^n\colon n\in\ZZ\}\cap U\neq\emptyset\}$
is open and dense, the set of topological 1-generators in $L_0(\SSS^1)$ is comeager. See also \cite{Gla} and \cite[Theorem 4.2.6]{P}.  

\begin{proposition}
\label{pr:Prec}
Let $G$ be a non-archimedean Polish group. If comeagerly many elements in $G$ generate a precompact group, then comeagerly many elements in $L_0(G)$ generate a precompact group.
\end{proposition}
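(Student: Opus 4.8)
The plan is to reduce, as with the earlier results, to showing that the relevant set is a dense $G_\delta$. The set of elements of $L_0(G)$ generating a precompact subgroup is a $G_\delta$ (precompactness of $\langle f \rangle$ is equivalent to each orbit of each point under the evaluation action being finite, which is a countable intersection of conditions that are themselves $G_\delta$ in the step-function picture), so it suffices to prove density. I would work with the model $G=\Aut(M)$ for a \fra limit $M$ of a \fra class $\f$, and use that step functions $f_{\bar a}$ are dense in $L_0(G)$. So the task becomes: given $k$-tuples $\bar p=(p^1,\dots,p^k)$ of partial automorphisms of $M$, find extensions $q^i \supseteq p^i$, automorphisms, such that the step function $f_{(q^1,\dots,q^k)}$ generates a precompact group, i.e.\ such that $\langle q^1, \dots, q^k\rangle$ is a precompact subgroup of $G$ --- equivalently, by Lemma~\ref{le:HruPreComp}-type reasoning, such that each $q^i$ pointwise stabilizes a common exhaustion of $M$ by finite substructures.

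The key point to exploit is that comeagerly many \emph{single} elements of $G$ generate a precompact group: by Lemma~\ref{le:HruPreComp} (the $n=1$ case) this means that for every partial automorphism $p$ of $M$ there is an automorphism $f \supseteq p$ generating a precompact group, obtained by a back-and-forth that builds an exhaustion $A_1 \subseteq A_2 \subseteq \cdots$ of $M$ by finite substructures, together with partial automorphisms $p \subseteq p_1 \subseteq p_2 \subseteq \cdots$ with $\dom(p_j)=\rng(p_j)=A_j$. The observation is that this construction can be carried out \emph{simultaneously} for finitely many partial automorphisms $p^1, \dots, p^k$ using the \emph{same} exhaustion $A_1 \subseteq A_2 \subseteq \cdots$: at each stage, given $A_j$ and $p^i_j\colon A_j \to A_j$ for $i \le k$, pick a new element $b$ of $M$; for each $i$ in turn, use $1$-Hrushovski (which holds since, by hypothesis, comeagerly many single elements generate a precompact group) together with ultrahomogeneity to find a finite substructure $A \supseteq A_j \cup \{b\}$ to which $p^i_j$ extends as an automorphism; amalgamate these finitely many extensions over $A_j$ (this is possible in $\f$, which has the amalgamation property) to obtain a single finite substructure $A_{j+1}$ containing $A_j \cup\{b\}$ on which all of $p^1_j, \dots, p^k_j$ extend to automorphisms $p^1_{j+1}, \dots, p^k_{j+1}$. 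Iterating with a bookkeeping enumeration of $M$ produces automorphisms $q^i=\bigcup_j p^i_j$, each an automorphism of $M$, all of which pointwise fix the setwise-fixed finite substructures $A_j$ --- hence $\langle q^1, \dots, q^k \rangle$ has all orbits finite and is precompact. Then the step function with these constant pieces generates a precompact subgroup of $L_0(G)$, since $L_0$ of a precompact group is precompact (or directly: all evaluation orbits of the generated group on $L_0(M)$ restricted to the relevant finite-valued functions remain finite).

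I would then note that this produces a step function in any prescribed basic open set $[f_{\bar a}, \delta]$ that generates a precompact group, which gives density, and combined with the $G_\delta$ remark finishes the proof. The main obstacle is the amalgamation step: one must check that ``extends to an automorphism of a finite substructure" is a property that survives amalgamation over $A_j$ in $\f$ --- i.e.\ that amalgamating several one-point-or-more extensions, each carrying an extension of the \emph{same} partial automorphism $p^i_j$ of the common part $A_j$, yields a structure on which all the $p^i_j$ simultaneously extend. This is where the hypothesis genuinely enters (through $1$-Hrushovski, from Lemma~\ref{le:HruPreComp}) and where some care with the \fra-theoretic bookkeeping is needed; everything else is a routine back-and-forth and the standard translation between precompactness of $\langle \bar f \rangle$, finiteness of evaluation orbits, and exhaustion by setwise-invariant finite substructures.
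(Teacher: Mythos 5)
Your reduction to ``dense $G_\delta$'' rests on the claim that $\{f \in L_0(G)\colon \gen{f} \text{ precompact}\}$ is $G_\delta$, justified by ``precompactness of $\gen{f}$ is equivalent to each orbit of each point under the evaluation action being finite.'' That equivalence is a feature of \emph{non-archimedean} groups acting on a countable structure; $L_0(G)$ is connected, hence never non-archimedean, and its natural evaluation action is on the uncountable space $L_0(X,\mu;M)$, so neither the characterization nor the ``countable intersection of open conditions'' structure applies, and your sketch supplies no substitute. This is exactly where the whole difficulty of the proposition sits: precompactness in $L_0(G)$ cannot be read off pointwise or orbitwise. The discussion immediately preceding Proposition \ref{pr:Prec} makes this vivid: $\SSS^1$ is compact, yet the elements of $L_0(\SSS^1)$ generating a precompact subgroup form a dense but meager (so certainly not $G_\delta$) set; so any ``dense $+$ $G_\delta$'' scheme must use non-archimedeanity of $G$ in an essential way, which your argument never does. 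The same confusion appears in your claim that ``$L_0$ of a precompact group is precompact,'' which is false (same example). The paper's proof sidesteps the $G_\delta$ question entirely: by Lemma 5 of \cite{KM} the set $R$ of $f$ with $f(x)\in P$ for a.e.\ $x$ (where $P\subseteq G$ is the comeager set of precompact generators) is comeager in $L_0(G)$, and one then shows \emph{directly} that every $f\in R$ generates a precompact group, by extracting from an arbitrary sequence in $\gen{f}$ a convergent subsequence: pointwise finiteness of evaluation orbits gives powers fixing initial segments of $M$ off sets of measure $<2^{-m}$, and a pigeonhole/diagonal argument over sets $A_m$ with $\mu(A_m)>1-2^{-m}$ produces a Cauchy subsequence. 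Without some argument of this kind (or a genuine proof that the set is $G_\delta$), density gives nothing, so this gap is fatal as the proof stands.

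A secondary point: your density argument is both more than is needed and itself gapped. Extending $k$ partial automorphisms simultaneously along a common exhaustion $A_1\subseteq A_2\subseteq\cdots$ is essentially the $k$-Hrushovski property, which does \emph{not} follow from the hypothesis (the hypothesis, via Lemma \ref{le:HruPreComp}, gives only the $1$-Hrushovski property); the amalgamation step you flag as ``the main obstacle'' is precisely where this fails in general, since amalgamation in $\age(M)$ amalgamates structures, not structures equipped with automorphisms --- compare Theorem \ref{47}, whose point is exactly the gap between the $1$- and $n$-Hrushovski properties. Fortunately, for density you do not need the values to be \emph{jointly} precompact: a step function $f_{\bar b}$ on a fixed finite partition whose values $b^i$ each individually generate a precompact subgroup of $G$ lies in the compact subgroup of $L_0(G)$ of functions constant on the pieces with $i$-th value in $\overline{\gen{b^i}}$, hence generates a precompact subgroup; combined with density of $P$ in $G$ this yields density of precompact generators in $L_0(G)$ with no back-and-forth at all. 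But, as above, density alone does not yield comeagerness here.
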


\begin{proof}
Suppose that the set $P \subseteq G^n$ of all elements generating a precompact group is comeager. Then, by \cite[Lemma 5]{KM}, the set $R \subseteq L_0(G)$ of $f$ such that $f(x) \in P$ for almost all $x \in X$ is comeager in $L_0(G)$. Fix $f \in R$, and a sequence $\{ f^0_k \} \subseteq \gen{f}$. For every $m>0$, we will find $A_m \subseteq X$ with $\mu(A_m)>1-2^{-m}$, and a subsequence $\{ f^m_k \}$ of $\{ f^{m-1}_k \}$ that satisfies $(f^m_k(x))(m')=(f^m_{k'}(x))(m')$ for $x \in A_m$, $k,k' \in \NN$, and $m' \leq m$. It is straightforward to verify that then the sequence $\{ f^m_m \}$ is convergent. 

Fix $m>0$. For every $x \in X$, fix $M_x \in \NN$ such that $(f^{M_x}(x))(m')=m'$ for $m' \leq m$ (note that $M_x$ is well-defined because the evaluation action of each group $\gen{f(x)}$ on $\NN$ has all orbits finite). Thus, there exists $A_m \subseteq X$, and $M \in \NN$ such that $\mu(A_m)>1-2^{-m}$, and $(f^M(x))(m')=m'$ for $x \in A_m$ and $m' \leq m$. 
Using the pigeon hole principle, we can easily choose a subsequence $\{ f^m_k \}$ from $\{ f^{m-1}_k \}$ such that $(f^m_k(x))(m')=(f^m_{k'}(x))(m')$ for every $x \in A_m$, $k,k' \in \NN$ and $m' \leq m$.
\end{proof}

We do not know whether Proposition \ref{pr:Prec} holds also for tuples of elements from $G$. On the other hand, it is easy to see that, if comeagerly many $n$-tuples in $G$ generate a discrete group, then comeagerly many $n$-tuples in $L_0(G)$ also generate a discrete group. This is because every $\bar{f} \in L_0(G)$ such that $\gen{\bar{f}(x)}$ is discrete for almost all $x \in X$, generates a discrete group.

\section*{Acknowledgement}
We thank Christian Rosendal for pointing out that  Lemma~\ref{le:NonConv} leads to a trichotomy formulated in
Theorem \ref{th:Trichotomy}. We also thank the referee for carefully reading the paper.

\end{document}